\documentclass[thmsec,dntnbr,nolinenbrs]{dmgt}

\usepackage{amsmath}
\usepackage{enumitem}

\allowdisplaybreaks
\numberwithin{equation}{section}

\newcommand{\F}{\mathbb{F}}
\newcommand{\Z}{\mathbb{Z}}
\newcommand{\one}{\mathbf{1}}
\newcommand{\Span}{\operatorname{span}}
\newcommand{\nd}{\operatorname{nd}}

\newauthor{Arthur F. Ramos}{A. F. Ramos}{Microsoft, Redmond, WA, USA}[arfreita@microsoft.com]
\newauthor{David Barros Hulak}{D. B. Hulak}{Independent researcher}[dbhulak@gmail.com]
\newauthor{Ruy J. G. B. de Queiroz}{R. J. G. B. de Queiroz}{Centro de Inform\'atica\\
Universidade Federal de Pernambuco\\
Recife, Brazil}[ruy@cin.ufpe.br]

\title{Pair-Trace Absorption Certificates for Regular Induced Subgraphs}[Pair-Trace Certificates]

\keywords{regular induced subgraphs, modular degree conditions, dyadic lifting, graph Ramsey theory}

\classnbr{05C35, 05C55, 05C50}

\begin{document}

\begin{abstract}
We study a fixed-core absorption problem for regular induced subgraphs.  A set is \(q\)-modular if all induced degrees are congruent modulo \(q\).  Given a \(q\)-modular witness \(A\) and a retained core \(U\subseteq A\), we ask when deleting equal-trace \(q\)-tuples from \(A\setminus U\) synchronizes the degrees on \(U\) modulo \(2q\).

The main contribution is a finite absorption-or-obstruction certificate.  We give an exact quotient formula for the deletion-tail obstruction: in complement-orbit coordinates the correct expression uses oriented differences \(n_B-n_{U\setminus B}\), not sums.  Equal-trace \(q\)-tuples absorb exactly the span of their trace classes in \(\F_2^U/\langle\one_U\rangle\).  In particular, a connected graph of \(q\)-heavy two-point traces on \(U\), together with one odd trace when \(|U|\) is even, absorbs every top-bit defect by deleting at most \(q(|U|-1)\) tail vertices.  If fixed-core absorption fails, the obstruction is an explicit even parity cut of \(U\).

We also record the parity base, the terminal modular criterion, and a conditional modular-witness threshold theorem explaining the relevance to the Erd\H{o}s--Fajtlowicz--Staton problem.  The paper does not claim to solve that problem or to improve the general lower bound for \(F(n)\).
\end{abstract}

\section{Introduction}

All graphs in this paper are finite, simple, and undirected.  For a graph \(G\) and a set \(S\subseteq V(G)\), write \(G[S]\) for the induced subgraph on \(S\), and write \(\deg_S(v)\) for the degree of \(v\) inside \(G[S]\).  Define
\[
        f(G)=\max\{|S|:G[S]\text{ is regular}\},
        \qquad
        F(n)=\min_{|V(G)|=n} f(G).
\]
Equivalently, \(F(n)\) is the largest integer \(k\) such that every \(n\)-vertex graph contains a regular induced subgraph on at least \(k\) vertices.  The Erd\H{o}s--Fajtlowicz--Staton problem asks whether
\[
        \frac{F(n)}{\log n}\to\infty .
\]
The problem appears in work of Erd\H{o}s and in the finite Ramsey-number formulation studied by Fajtlowicz, McColgan, Reid, and Staton \cite{Erdos1993,FMRS1995}; it is also discussed in \cite{ChungGraham1998}.  See also the later work on nearly regular induced subgraphs \cite{AKS2008}, repeated degrees \cite{CaroYuster2020}, random graphs \cite{KrivelevichSudakovWormald2011}, and recent regular-induced Ramsey numbers \cite{DysonMcKay2026}.  The classical Ramsey argument gives \(F(n)\ge c\log n\), since every graph contains a clique or an independent set of logarithmic order.  The exact regular induced version is much more rigid than approximate regularity or repeated-degree variants: deleting a vertex changes the degree of every surviving neighbor.

The present paper is not a new Ramsey-number computation, a result on nearly regular induced subgraphs, or a repeated-degree theorem.  Its object is a modular obstruction calculus for lifting induced degree congruences on a fixed core.  The Ramsey-type threshold is included only to explain why such local lifting certificates are relevant.

This paper develops a modular approach to the problem.  The guiding idea is to first find a large induced set whose degrees are equal modulo \(2\), then lift the congruence to modulo \(4\), then to modulo \(8\), and so on.  Once a set \(U\) of order at most \(q\) has all internal degrees congruent modulo \(q\), the induced graph \(G[U]\) is genuinely regular.

The main purpose of the paper is to give a graph-theoretic certificate for one fixed-core dyadic absorption step.  We also isolate the exact kind of lifting theorem that would imply a positive answer to the Erd\H{o}s--Fajtlowicz--Staton problem, but the paper's proved results are the local obstruction and absorption theorems.

The resulting fixed-core mechanism has a simple three-part form.  First, a proposed dyadic lift has an exact quotient obstruction.  Second, a connected \(q\)-heavy pair-trace reservoir forces the rank-rich condition and hence absorbs every top-bit defect.  Third, if equal-trace absorption fails for the fixed core, then failure is witnessed by an explicit even parity cut.

\subsection{Main results}

The main results of the paper are as follows.

\begin{itemize}[leftmargin=*]
    \item \textbf{Conditional threshold theorem.}  If the dyadic lift from \(2\) to \(4\), and the lifts from \(q\) to \(2q\) for \(q\ge 4\), can be carried out with polynomial loss and exact modular-witness output size, then \(F(n)/\log n\to\infty\).  This is Theorem~\ref{thm:conditional-threshold}.
    \item \textbf{Exact tail obstruction.}  The deletion-tail obstruction is the quotient class of \(\rho_R\) modulo constant functions.  In complement-orbit coordinates it is controlled by oriented differences \(n_B-n_{U\setminus B}\), not by sums.  This is Proposition~\ref{prop:complement-difference} and Corollary~\ref{cor:next-bit}.
    \item \textbf{Complete twin-block absorption criterion.}  For a fixed core \(U\), equal-trace \(q\)-tuples can correct precisely the span of their trace classes in \(\F_2^U/\langle \one_U\rangle\).  This gives an if-and-only-if linear-algebra criterion for one absorption step.  This is Theorem~\ref{thm:trace-multiplicity}.
    \item \textbf{Rank-rich and structured reservoir absorption.}  If the traces occurring at multiplicity at least \(q\) span \(\F_2^U/\langle \one_U\rangle\), then every top-bit defect on \(U\) can be synchronized by deleting at most \(q(|U|-1)\) tail vertices.  A connected \(q\)-heavy pair-trace graph, together with one odd trace when \(|U|\) is even, is a concrete graph-theoretic sufficient condition for this rank-rich hypothesis.  A separate random trace-reservoir theorem, with an explicit uniform-distribution bound, gives a model/certification version.  These are Theorems~\ref{thm:rank-rich}, \ref{thm:pair-trace-connected}, \ref{thm:random-reservoir}, and Corollary~\ref{cor:uniform-random}.
    \item \textbf{Dual obstruction certificate.}  Failure of twin-block absorption is equivalent to the existence of a linear functional that annihilates all available trace classes but detects the top-bit defect.  Equivalently, there is an even subset \(Y\subseteq U\) meeting every available trace in even parity while meeting the top-bit defect in odd parity.  The same linear algebra gives an algorithm that outputs either a deletion certificate or this parity cut.  This is Corollary~\ref{cor:dual-certificate} and Theorem~\ref{thm:parity-cut}.
    \item \textbf{Calibration classes.}  Perfect graphs contain regular induced subgraphs of order at least \(\sqrt n\), and graphs of neighborhood diversity at most \(t\) contain regular induced subgraphs of order at least \(n/t\).  These elementary calibrations show that hard examples must have neither large homogeneous sets nor low trace complexity.
\end{itemize}

\section{Modular witnesses}

\begin{definition}
For an integer \(q\ge 1\), a set \(U\subseteq V(G)\) is called \emph{\(q\)-modular} if
\[
        \deg_U(u)\equiv \deg_U(v)\pmod q
\]
for all \(u,v\in U\).
\end{definition}

\begin{lemma}[Terminal modular criterion]\label{lem:terminal}
If \(U\) is \(q\)-modular and \(|U|\le q\), then \(G[U]\) is regular.
\end{lemma}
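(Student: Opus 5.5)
The approach is to show that the induced degrees on \(U\) all lie in a window of fewer than \(q\) consecutive integers, so that congruence modulo \(q\) forces them to be equal.

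First, I dispose of the trivial case \(U=\emptyset\). For \(|U|=m\ge 1\), every vertex \(u\in U\) satisfies \(0\le \deg_U(u)\le m-1\), because \(G\) is simple and \(u\) has at most the other \(m-1\) vertices of \(U\) as neighbours inside \(G[U]\). Thus every induced degree lies in the set \(\{0,1,\dots,m-1\}\), which has exactly \(m\) elements.

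Next, I compare two vertices \(u,v\in U\). Both degrees lie in \(\{0,\dots,m-1\}\), so
\[
|\deg_U(u)-\deg_U(v)|\le m-1\le q-1<q .
\]
By \(q\)-modularity, \(q\) divides \(\deg_U(u)-\deg_U(v)\). The only multiple of \(q\) with absolute value less than \(q\) is \(0\), so \(\deg_U(u)=\deg_U(v)\). Since \(u,v\) were arbitrary, \(G[U]\) is regular.

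There is no real obstacle here. The only point needing care is that the degree window has size exactly \(|U|\) and not \(|U|+1\); this is what makes the hypothesis \(|U|\le q\), rather than \(|U|<q\), sufficient. The case \(q=1\) needs no separate treatment, since then \(|U|\le 1\) and the graph is trivially regular.
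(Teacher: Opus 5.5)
Your proof is correct and matches the paper's argument: every induced degree lies in $\{0,\dots,|U|-1\}$, a window of length at most $q-1$, so two degrees that are congruent modulo $q$ must be equal. Your separate remarks on the empty core and on $q=1$ are harmless additions to the same argument.
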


\begin{proof}
Every degree in \(G[U]\) lies in the interval \([0,|U|-1]\), whose length is at most \(q-1\).  Two integers in this interval that are congruent modulo \(q\) must be equal.  Hence all degrees in \(G[U]\) are equal.
\end{proof}

\section{The parity base}

The dyadic program starts from the following standard parity fact.

\begin{lemma}[Parity partition]\label{lem:parity}
Every graph \(G\) on \(n\) vertices contains an induced subgraph on at least \(n/2\) vertices in which all degrees are even.
\end{lemma}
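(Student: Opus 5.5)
This is Gallai's theorem: the vertex set can be split into two parts, each inducing a subgraph in which every degree is even. The larger part then has at least \(n/2\) vertices, which gives the lemma. I would prove the partition statement by linear algebra over \(\F_2\).

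Let \(A\) be the adjacency matrix of \(G\) over \(\F_2\), and let \(D\) be the diagonal matrix of degrees mod \(2\). Identify a set \(S\) with its indicator vector \(x\in\F_2^{V}\). For \(v\in S\) we have \(\deg_S(v)=(Ax)_v\), and for \(v\notin S\) we have \(\deg_{V\setminus S}(v)=\deg(v)-(Ax)_v\). So both parts induce even-degree subgraphs exactly when
\[
(A+D)x = d \quad\text{over }\F_2,
\]
where \(d\) is the degree vector mod \(2\). To check this coordinate by coordinate: if \(x_v=1\) the \(v\)-th equation reads \((Ax)_v+\deg(v)=\deg(v)\), i.e. \((Ax)_v=0\). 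If \(x_v=0\) it reads \((Ax)_v=\deg(v)\), i.e. \(\deg_{V\setminus S}(v)\) is even.

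The matrix \(M=A+D\) is symmetric, and its diagonal equals \(d\). The key step is to show that \(d\) lies in the column space of \(M\). Since \(M\) is symmetric, its column space is the orthogonal complement of \(\ker M\). So it suffices to show that \(y^{T}d=0\) for every \(y\in\ker M\).

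For a symmetric matrix over \(\F_2\) the off-diagonal terms of \(y^{T}My\) cancel in pairs, so
\[
y^{T}My=\sum_v M_{vv}y_v^2=\sum_v d_v y_v = y^{T}d .
\]
If \(My=0\), then \(y^{T}My=0\), hence \(y^{T}d=0\), as required.

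Therefore the system \((A+D)x=d\) has a solution \(S\). The sets \(S\) and \(V\setminus S\) both induce subgraphs with all degrees even, and one of them has order at least \(n/2\).

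The only step that needs an idea is the solvability argument, specifically the identity \(y^{T}My=y^{T}d\) that uses the diagonal of \(M\). The rest is bookkeeping. An alternative would be the standard induction on \(n\): pick a vertex, delete or locally complement its neighborhood, and apply induction. I would use the linear-algebra route because it is shorter and matches the \(\F_2\) methods used later in the paper.
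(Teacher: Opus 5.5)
Your proof is correct and is essentially the paper's argument. It uses the same mod-\(2\) Laplacian \(A+D\), the same solvability step (showing \(y^{T}d=y^{T}My=0\) for \(y\in\ker M\) and using symmetry), and the same conclusion that both classes of the solution vector induce even-degree subgraphs. The only difference is cosmetic: you check the parity condition by a case split on \(x_v\), whereas the paper does it in one line via \(\sum_{u\sim v}(1+c_u+c_v)=d_v+(Lc)_v\).
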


\begin{proof}
Work over \(\F_2\).  Let \(A\) be the adjacency matrix of \(G\), let \(D\) be the diagonal matrix with \(D_{vv}=\deg_G(v)\pmod 2\), and put \(L=A+D\).  This is the graph Laplacian modulo \(2\).  Let \(d\) be the vector of degrees modulo \(2\).

We claim that \(d\) lies in the column space of \(L\).  Since \(L\) is symmetric, it is enough to show that \(x\cdot d=0\) for every \(x\in\ker L\).  For such \(x\),
\[
        0=x^T Lx
        =\sum_v d_vx_v^2+\sum_{uv\in E(G)}2x_ux_v
        =\sum_v d_vx_v
        =x\cdot d
\]
in \(\F_2\).  Hence there is a vector \(c\in \F_2^{V(G)}\) with \(Lc=d\).

Let \(V_i=\{v:c_v=i\}\).  For a vertex \(v\), the parity of its degree inside its own color class is
\[
        \sum_{u\sim v}(1+c_u+c_v)
        =d_v+\sum_{u\sim v}c_u+d_vc_v
        =d_v+(Lc)_v
        =0.
\]
Thus both parts induce graphs of even degree, and one part has order at least \(n/2\).
\end{proof}

\section{A conditional dyadic threshold theorem}

For a power of two \(q\) and an integer \(m\ge 1\), let \(M(q,m)\) be the least \(N\) such that every graph with at least \(N\) vertices contains a \(q\)-modular witness of exactly \(m\) vertices.  This modular-witness threshold is finite by Ramsey's theorem, since a clique or an independent set of order \(m\) is \(q\)-modular.  It is distinct from the at-least-\(k\) threshold implicit in \(F(n)\).  The next theorem records the formal consequence of a successful dyadic lifting theory.  The exact output size belongs to the modular witness, because the terminal modular criterion applies only when the final witness has order at most its modulus.

\begin{theorem}[Conditional dyadic threshold]\label{thm:conditional-threshold}\mbox{}\par\nobreak
Suppose there are constants \(C,a,C_0>0\) such that the following two lifting statements hold.
\begin{enumerate}[label=\textup{(\arabic*)}, leftmargin=*]
    \item Every \(2\)-modular witness of size at least \(C_0m\) contains a \(4\)-modular witness of exactly \(m\) vertices.
    \item For every power of two \(q=2^j\) with \(q\ge 4\), every \(q\)-modular witness of size at least \(Cq^a m\) contains a \(2q\)-modular witness of exactly \(m\) vertices.
\end{enumerate}
Then there exists a constant \(A\) such that
\[
        M(2^r,2^r)\le 2^{A(r+1)^2}
\]
for all \(r\ge 1\).  Consequently \(F(n)/\log n\to\infty\).
\end{theorem}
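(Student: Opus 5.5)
We argue by working backwards from the target witness size. Fix \(r\ge 1\) and set \(m=2^r\). The goal is a \(2^r\)-modular witness of exactly \(2^r\) vertices. Define a decreasing chain of required sizes. Put \(N_r=m\). For \(j=r-1,\dots,2\), put \(N_j=C\,2^{ja}N_{j+1}\); this is the size of a \(2^j\)-modular witness that, by hypothesis (2) with \(q=2^j\), contains a \(2^{j+1}\)-modular witness of exactly \(N_{j+1}\) vertices. Finally put \(N_1=C_0N_2\), so that hypothesis (1) converts a \(2\)-modular witness of size \(N_1\) into a \(4\)-modular witness of exactly \(N_2\) vertices. By Lemma~\ref{lem:parity}, every graph on \(n\ge 2N_1\) vertices contains a \(2\)-modular (indeed even-degree) witness of size at least \(N_1\). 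Chaining the lifts then yields a \(2^r\)-modular witness of exactly \(m\) vertices. Two remarks: each hypothesis is stated for witnesses of size \emph{at least} the bound, so the larger intermediate sets are harmless; and a \(2^{j+1}\)-modular set of exact size \(N_{j+1}\) is in particular of size at least \(N_{j+1}\), so it feeds into the next step. When \(r=1\) no lifting is needed, and when \(r=2\) only step (1) is used.

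Next we bound the product:
\[
M(2^r,2^r)\le 2N_1 = 2C_0\,2^r\prod_{j=2}^{r-1} C\,2^{ja}\le 2C_0\,C^{r}\,2^{r+a r^2/2}.
\]
This is at most \(2^{A(r+1)^2}\) once \(A\) is large in terms of \(C,C_0,a\); here we use \((r+1)^2\ge r,1,r^2\).

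For the consequence, observe that a \(2^r\)-modular witness with \(2^r\) vertices induces a regular graph by Lemma~\ref{lem:terminal}. Hence \(f(G)\ge 2^r\) whenever \(n\ge 2^{A(r+1)^2}\). Given \(n\), choose the largest \(r\) with \(A(r+1)^2\le\log_2 n\), so that \(r\approx\sqrt{\log_2 n/A}-1\). Then \(F(n)\ge 2^{r}\ge 2^{c\sqrt{\log n}}\) for large \(n\), and \(2^{c\sqrt{\log n}}/\log n\to\infty\). The quadratic exponent therefore still gives superlogarithmic growth.

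The only delicate points are bookkeeping. First, the lifts must be applied in order starting from the parity base with exact output sizes, which is precisely why the hypotheses are phrased with "exactly \(m\)". Second, the product \(\prod 2^{ja}\) is \(2^{O(r^2)}\) rather than exponential in \(r\). I expect no genuine obstacle beyond choosing \(A\) uniformly in \(r\), which requires handling small \(r\) via the constant.
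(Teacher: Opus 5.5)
Your proposal is correct and is essentially the paper's own argument: the same backward chain from the parity base through lift (1) and then lift (2) at \(q=4,\dots,2^{r-1}\), the same size bound \(2C_0\,2^r\prod_{j=2}^{r-1}C\,2^{ja}=2^{O(r^2)}\), and the same choice of maximal \(r\) with \(2^{A(r+1)^2}\le n\) combined with Lemma~\ref{lem:terminal}. The remaining points are bookkeeping that the paper also glosses over: the intermediate sizes must be integers, e.g.\ \(N_j=\lceil \max\{C,1\}\,2^{ja}N_{j+1}\rceil\), and \(C^r\) should read \(\max\{C,1\}^r\); both change the bound only by a factor \(2^{O(r)}\).
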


\begin{proof}
It is enough to estimate the number of vertices needed to guarantee a \(2^r\)-modular witness of exactly \(2^r\) vertices.  We work backwards along the dyadic ladder.

By Lemma~\ref{lem:parity}, every graph on \(2N\) vertices contains a \(2\)-modular witness on at least \(N\) vertices.  Applying the first lift with \(m\) equal to the required \(4\)-modular target costs a factor \(C_0\).  For each later step, the lift from \(2^j\) to \(2^{j+1}\) costs at most \(C(2^j)^a\).  Therefore it is enough to start with at most
\[
        2C_0\left(\prod_{j=2}^{r-1} C(2^j)^a\right)2^r
\]
vertices, with the usual convention that the empty product is \(1\).  Taking base-two logarithms gives \(O(r)+a\sum_{j=2}^{r-1}j=O(r^2)\).  Hence, after increasing \(A\) to cover small \(r\), we obtain \(M(2^r,2^r)\le 2^{A(r+1)^2}\).

This witness has order \(2^r\) and modulus \(2^r\), so Lemma~\ref{lem:terminal} makes it regular.

Finally, choose \(r\) maximal with \(2^{A(r+1)^2}\le n\).  Then \(r\) is asymptotic to a positive constant times \(\sqrt{\log n}\), and \(2^r/\log n\to\infty\).  By the bound on \(M(2^r,2^r)\), every \(n\)-vertex graph contains a \(2^r\)-modular witness of exactly \(2^r\) vertices; by the previous paragraph this witness is regular.  Hence \(F(n)\ge 2^r\), and therefore \(F(n)/\log n\to\infty\).
\end{proof}

\section{Tails and lifting obstructions}

Let \(A\) be a \(q\)-modular witness, and let \(W\subseteq A\).  Put \(R=A\setminus W\), called the deletion tail.  For \(v\in W\), define
\[
        \rho_R(v)=|N(v)\cap R|.
\]
Since
\[
        \deg_W(v)=\deg_A(v)-\rho_R(v),
\]
the obstruction to making \(W\) modular at a higher modulus is encoded by the tail function \(\rho_R\).

Suppose \(q=2^j\) and \(A\) is \(q\)-modular.  Choose a lift \(d\) modulo \(2q\) of the common degree residue modulo \(q\).  Define the top-bit label \(b_A(v)\in\F_2\) by
\[
        \deg_A(v)\equiv d+qb_A(v)\pmod{2q}.
\]
Changing the lift \(d\) changes \(b_A\) by a constant function, so all quotient classes used below are independent of this choice.

\begin{lemma}[Affine lift criterion]\label{lem:affine-lift}
Let \(W\subseteq A\), and let \(R=A\setminus W\).  Then \(W\) is \(2q\)-modular if and only if the function
\[
        v\longmapsto \rho_R(v)-qb_A(v)
\]
is constant modulo \(2q\) on \(W\).
\end{lemma}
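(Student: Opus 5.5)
The plan is a direct substitution: rewrite each induced degree on \(W\) using the tail identity and the definition of the top-bit label, so that \(2q\)-modularity of \(W\) becomes a statement about the function \(\rho_R-qb_A\). No earlier result is needed beyond these two identities.

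\emph{Step 1: express \(\deg_W\) modulo \(2q\).} For \(v\in W\) the identity \(\deg_W(v)=\deg_A(v)-\rho_R(v)\) holds, since every neighbour of \(v\) in \(A\) lies in exactly one of \(W\) or \(R\). Substituting the defining congruence \(\deg_A(v)\equiv d+qb_A(v)\pmod{2q}\) gives
\[
        \deg_W(v)\equiv d-\bigl(\rho_R(v)-qb_A(v)\bigr)\pmod{2q}.
\]

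\emph{Step 2: read off both directions.} By definition, \(W\) is \(2q\)-modular exactly when \(\deg_W(u)\equiv\deg_W(v)\pmod{2q}\) for all \(u,v\in W\). Since \(d\) is a single fixed residue, the display in Step~1 shows that this happens if and only if \(\rho_R(u)-qb_A(u)\equiv\rho_R(v)-qb_A(v)\pmod{2q}\) for all \(u,v\in W\). That is precisely the statement that \(v\mapsto\rho_R(v)-qb_A(v)\) is constant modulo \(2q\) on \(W\).

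\emph{Step 3: well-definedness.} The only point needing care, and the closest thing to an obstacle, is that \(b_A(v)\in\F_2\) while \(qb_A(v)\) must be a residue modulo \(2q\). Here \(q\cdot 1\equiv q\) and \(q\cdot 0\equiv 0\), so \(qb_A(v)\) is well defined modulo \(2q\) for either integer representative of \(b_A(v)\). Moreover, \(b_A(v)\) is well defined for each \(v\): \(A\) is \(q\)-modular, so \(\deg_A(v)\equiv d\pmod q\), hence \(\deg_A(v)-d\equiv 0\) or \(q\pmod{2q}\).

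Changing the lift \(d\) to \(d+q\) replaces \(b_A\) by \(b_A+1\), which shifts the function \(\rho_R-qb_A\) by the constant \(q\). A constant shift does not affect whether the function is constant on \(W\), so the criterion is independent of the choice of lift.
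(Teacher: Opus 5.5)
Your proposal is correct and follows the paper's proof: substitute \(\deg_W(v)=\deg_A(v)-\rho_R(v)\) and the defining congruence for \(b_A\). Then observe that \(\deg_W(v)\equiv d-(\rho_R(v)-qb_A(v))\pmod{2q}\) is independent of \(v\) exactly when \(\rho_R-qb_A\) is constant modulo \(2q\) on \(W\). Your well-definedness remarks (that \(qb_A\) is well defined modulo \(2q\), and that the criterion does not depend on the choice of lift \(d\)) are accurate and match the paper's comment just before the lemma.
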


\begin{proof}
For \(v\in W\),
\[
        \deg_W(v)
        =\deg_A(v)-\rho_R(v)
        \equiv d+qb_A(v)-\rho_R(v)\pmod{2q}.
\]
The right side is independent of \(v\) precisely when \(\rho_R(v)-qb_A(v)\) is independent of \(v\) modulo \(2q\).
\end{proof}

This criterion is the clean algebraic core of the framework.  It suggests looking for large subsets \(W\) on which the tail function, corrected by the top-bit label, is constant modulo \(2q\).

\section{The exact tail obstruction}

The affine criterion of Lemma~\ref{lem:affine-lift} shows that the obstruction to a dyadic lift is a class of a tail-counting function modulo constants.  This section gives an exact form of that class.  It also explains why naive complement-orbit aggregation is insufficient.

Let \(U\) be a finite vertex set and let \(R\) be a tail.  For each trace \(B\subseteq U\), define
\[
        n_B=|\{x\in R:N(x)\cap U=B\}|.
\]
The tail-counting function is
\[
        \rho_R=\sum_{B\subseteq U} n_B\one_B\in \Z^U.
\]
Since \(\one_{U\setminus B}=\one_U-\one_B\), the class of \(\rho_R\) modulo constants is controlled by oriented differences between complementary traces, not by their sums.

\begin{proposition}[Exact complement-difference formula]\label{prop:complement-difference}
Choose one representative \(B\) from each complement orbit \(\{B,U\setminus B\}\), excluding the constant orbit \(\{\varnothing,U\}\).  Then, in \(\Z^U/\Z\one_U\),
\[
        [\rho_R]
        =\sum_{[B]}(n_B-n_{U\setminus B})[\one_B].
\]
Consequently, for every integer \(s\ge 1\), the class of \(\rho_R\) modulo constants in \((\Z/2^s\Z)^U/\langle \one_U\rangle\) is represented by the same expression reduced modulo \(2^s\).
\end{proposition}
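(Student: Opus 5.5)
The plan is to compute directly in \(\Z^U\) and then pass to the quotient. Start from the definition \(\rho_R=\sum_{B\subseteq U} n_B\one_B\). Every tail vertex \(x\in R\) has exactly one trace \(N(x)\cap U\), so this sum is well defined and finite. Split the index set \(2^U\) into complement orbits \(\{B,U\setminus B\}\). When \(U\neq\varnothing\) each orbit has exactly two elements, since \(B=U\setminus B\) is impossible. The degenerate case \(U=\varnothing\) is handled separately: there \(\Z^U\) is zero and the statement is trivial.

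For the constant orbit \(\{\varnothing,U\}\), the contribution is \(n_\varnothing\one_\varnothing+n_U\one_U=n_U\one_U\). This lies in \(\Z\one_U\) and so vanishes in \(\Z^U/\Z\one_U\).

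For every other orbit, with chosen representative \(B\), the identity \(\one_{U\setminus B}=\one_U-\one_B\) gives
\[
n_B\one_B+n_{U\setminus B}\one_{U\setminus B}=(n_B-n_{U\setminus B})\one_B+n_{U\setminus B}\one_U .
\]
The second term is again a multiple of \(\one_U\). Summing over orbits then yields
\[
\rho_R=\sum_{[B]}(n_B-n_{U\setminus B})\one_B+c\,\one_U
\]
for the integer \(c=n_U+\sum_{[B]}n_{U\setminus B}\). Passing to the quotient gives the displayed formula. I would also note that choosing the other representative \(U\setminus B\) flips the sign of the coefficient and replaces \([\one_B]\) by \([\one_{U\setminus B}]=-[\one_B]\), so the formula does not depend on the choices.

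For the second sentence, consider the reduction map \(\Z^U\to(\Z/2^s\Z)^U\). It is a ring-compatible group homomorphism and sends \(\Z\one_U\) onto \(\langle\one_U\rangle\). It therefore induces a homomorphism \(\Z^U/\Z\one_U\to(\Z/2^s\Z)^U/\langle\one_U\rangle\). Applying this map to the integer identity above (before taking classes, so the term \(c\,\one_U\) is visibly killed) gives the reduced expression, with coefficients \(n_B-n_{U\setminus B}\bmod 2^s\).

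There is no real obstacle in this argument; it is bookkeeping. The only points needing care are these: orbits are genuinely pairs, the constant orbit is absorbed into \(\Z\one_U\), and the quotient map is well defined on classes. The oriented sign, that is, differences rather than sums, comes precisely from the minus sign in \(\one_{U\setminus B}=\one_U-\one_B\). I would remark on this to explain the contrast with naive aggregation.
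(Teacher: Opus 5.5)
Your proposal is correct and follows essentially the same route as the paper: pair each complement orbit, rewrite using \(\one_{U\setminus B}=\one_U-\one_B\), drop the resulting multiples of \(\one_U\) in the quotient, and reduce modulo \(2^s\) through the induced map on quotients. Your explicit treatment of the constant orbit \(\{\varnothing,U\}\), whose contribution \(n_U\one_U\) vanishes in the quotient, fills in a small step that the paper's proof passes over silently; the same holds for your checks that the degenerate case \(U=\varnothing\) is trivial and that the formula does not depend on the choice of representatives.
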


\begin{proof}
Pair the two complementary trace classes in each orbit.  Their contribution to \(\rho_R\) is
\[
        n_B\one_B+n_{U\setminus B}\one_{U\setminus B}.
\]
Using \(\one_{U\setminus B}=\one_U-\one_B\), this becomes
\[
        n_{U\setminus B}\one_U+(n_B-n_{U\setminus B})\one_B.
\]
The first term is constant on \(U\), so it vanishes in the quotient by constants.  Summing over complement orbits proves the formula.  Reduction modulo \(2^s\) gives the final statement.
\end{proof}

\begin{corollary}[Exact next-bit obstruction]\label{cor:next-bit}
Let \(m\ge 0\).  Suppose \(\rho_R\) is constant modulo \(2^m\) on \(U\); equivalently, \([\rho_R]\) vanishes in \((\Z/2^m\Z)^U/\langle \one_U\rangle\).  Choose an integer \(c\) such that
\[
        \rho_R-c\one_U\in 2^m\Z^U.
\]
Define
\[
        \Theta_m(R,U)
        =\left[\frac{\rho_R-c\one_U}{2^m}\pmod 2\right]
        \in \F_2^U/\langle \one_U\rangle .
\]
Then \(\Theta_m(R,U)\) is independent of the choice of \(c\), and \(\Theta_m(R,U)=0\) if and only if \(\rho_R\) is constant modulo \(2^{m+1}\) on \(U\).
\end{corollary}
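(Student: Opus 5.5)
The argument is direct: I will first establish well-definedness and then prove the equivalence by comparing residues modulo \(2^{m+1}\). Throughout I treat \(m=0\) uniformly: modulo \(2^0=1\) every function is constant and every integer \(c\) is admissible, and nothing below uses \(m\ge 1\).

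\emph{Well-definedness.} Let \(c\) and \(c'\) both satisfy \(\rho_R-c\one_U\in 2^m\Z^U\). If \(U\) is empty, the quotient \(\F_2^U/\langle\one_U\rangle\) is the zero space and there is nothing to prove. Otherwise, evaluating both conditions at any single vertex gives \(c\equiv c'\pmod{2^m}\). Write \(c'=c+2^m k\) with \(k\in\Z\). Then
\[
        \frac{\rho_R-c'\one_U}{2^m}=\frac{\rho_R-c\one_U}{2^m}-k\one_U .
\]
Reducing modulo \(2\), the two representatives differ by \(k\one_U\). This vector lies in \(\langle\one_U\rangle\), so the class \(\Theta_m(R,U)\) does not depend on the choice of \(c\).

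\emph{The equivalence.} Put \(g=(\rho_R-c\one_U)/2^m\in\Z^U\), so that \(\rho_R=c\one_U+2^m g\).

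Suppose first that \(\Theta_m(R,U)=0\). Then \(g\bmod 2\) is a constant vector \(\epsilon\one_U\) with \(\epsilon\in\{0,1\}\). Hence \(2^m g\equiv 2^m\epsilon\one_U\pmod{2^{m+1}}\), and so \(\rho_R\equiv(c+2^m\epsilon)\one_U\pmod{2^{m+1}}\). Thus \(\rho_R\) is constant modulo \(2^{m+1}\).

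Conversely, suppose \(\rho_R\equiv c''\one_U\pmod{2^{m+1}}\) for some integer \(c''\). Since \(\rho_R\equiv c\one_U\pmod{2^m}\) as well, comparing at any vertex of \(U\) gives \(c''\equiv c\pmod{2^m}\). So \(c''\) is itself an admissible choice in the definition of \(\Theta_m\). With this choice the representative \((\rho_R-c''\one_U)/2^m\) lies in \(2\Z^U\), and its reduction modulo \(2\) is \(0\). By the independence of the choice already established, \(\Theta_m(R,U)=0\).

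\emph{Main obstacle.} There is no real obstacle here. The one point that needs care is in the converse direction: the witness \(c''\) for constancy modulo \(2^{m+1}\) must be shown to be an admissible choice of \(c\), which requires the comparison at a vertex of \(U\), and then the independence of \(c\) is what lets us evaluate \(\Theta_m\) using \(c''\).

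If desired, one could add a remark via Proposition~\ref{prop:complement-difference}: reducing its formula modulo \(2^{m+1}\) expresses \(\Theta_m\) in terms of the oriented differences \(n_B-n_{U\setminus B}\). This is not needed for the proof.
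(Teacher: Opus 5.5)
Your proof is correct and follows the same route as the paper. Well-definedness holds because two admissible choices of \(c\) differ by a multiple of \(2^m\), so after division by \(2^m\) the representatives differ by a constant vector. The paper then asserts in one line that \(\Theta_m\) vanishes exactly when \(\rho_R\) is constant modulo \(2^{m+1}\); you unpack both directions of this and also treat \(U=\varnothing\) and \(m=0\) explicitly.

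One small simplification in the converse: \(c''\) is admissible directly, since \(\rho_R-c''\one_U\in 2^{m+1}\Z^U\subseteq 2^m\Z^U\). So the comparison with \(c\) at a vertex is not actually needed there, contrary to what your closing remark says.
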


\begin{proof}
If \(c\) is replaced by another valid choice \(c'\), then \(c-c'\) is divisible by \(2^m\).  After division by \(2^m\), the change is a constant vector, which vanishes in the quotient \(\F_2^U/\langle \one_U\rangle\).  Thus \(\Theta_m\) is well defined.

The class \(\Theta_m\) vanishes exactly when \((\rho_R-c\one_U)/2^m\) is constant modulo \(2\).  This is equivalent to saying that for some integer \(c'\), \(\rho_R-c'\one_U\in 2^{m+1}\Z^U\), which is exactly constancy modulo \(2^{m+1}\).
\end{proof}

\begin{corollary}[Oriented orbit form under divisibility]\label{cor:oriented-divisibility}
Assume, in addition, that for every chosen complement-orbit representative \(B\), the difference \(n_B-n_{U\setminus B}\) is divisible by \(2^m\).  Then
\[
        \Theta_m(R,U)
        =\sum_{[B]}\left(\frac{n_B-n_{U\setminus B}}{2^m}\pmod 2\right)[\one_B]
\]
in \(\F_2^U/\langle \one_U\rangle\).
\end{corollary}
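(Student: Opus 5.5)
The plan is to start from the exact integer identity behind Proposition~\ref{prop:complement-difference}, not just its quotient form. Fix the chosen orbit representatives \(B\), excluding \(\{\varnothing,U\}\). Pairing complementary traces as in that proof gives an identity in \(\Z^U\):
\[
        \rho_R
        = \Bigl(n_U+\sum_{[B]} n_{U\setminus B}\Bigr)\one_U
          +\sum_{[B]}(n_B-n_{U\setminus B})\one_B .
\]
The trace \(\varnothing\) contributes nothing and the trace \(U\) contributes \(n_U\one_U\). Write \(c_0\) for the integer coefficient of \(\one_U\) in this identity.

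Under the divisibility hypothesis, every coefficient \(n_B-n_{U\setminus B}\) is divisible by \(2^m\). Hence \(\rho_R-c_0\one_U\in 2^m\Z^U\). This shows that \(\rho_R\) is constant modulo \(2^m\), so the hypothesis of Corollary~\ref{cor:next-bit} holds. It also shows that \(c=c_0\) is a valid choice in that corollary.

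By the independence of choice proved in Corollary~\ref{cor:next-bit}, we may compute \(\Theta_m(R,U)\) with \(c=c_0\). Then
\[
        \frac{\rho_R-c_0\one_U}{2^m}=\sum_{[B]}\frac{n_B-n_{U\setminus B}}{2^m}\,\one_B
\]
holds exactly in \(\Z^U\). Reducing modulo \(2\) and passing to the quotient by \(\langle\one_U\rangle\) gives the stated formula.

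The only point needing care is to use the integer identity rather than the quotient statement of Proposition~\ref{prop:complement-difference}. Dividing by \(2^m\) is not well defined on classes modulo \(\Z\one_U\) unless the constant part is tracked. Choosing \(c=c_0\) explicitly, and invoking the well-definedness part of Corollary~\ref{cor:next-bit}, handles this.
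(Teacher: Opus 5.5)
Your proof is correct and follows the paper's route: pair complementary traces as in Proposition~\ref{prop:complement-difference}, divide the oriented differences termwise by \(2^m\), and reduce modulo \(2\). The paper performs that termwise division directly on the quotient representative. Your explicit choice \(c=c_0\) from the integer identity, combined with the well-definedness part of Corollary~\ref{cor:next-bit}, makes that step rigorous, and it also shows that the divisibility hypothesis already implies the constancy hypothesis of Corollary~\ref{cor:next-bit}.
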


\begin{proof}
By Proposition~\ref{prop:complement-difference}, the tail class modulo constants is represented by \(\sum_{[B]}(n_B-n_{U\setminus B})[\one_B]\).  The assumed divisibility permits termwise division by \(2^m\).  Reducing the resulting coefficients modulo \(2\) gives the displayed formula.
\end{proof}

\begin{example}[Why complement sums fail]\label{ex:sums-fail}
Let \(U=\{1,2,3,4\}\), and let \(R=\{x,y\}\) with
\[
        N(x)\cap U=\{1\},
        \qquad
        N(y)\cap U=\{2,3,4\}.
\]
Then \(\rho_R=(1,1,1,1)\), so the tail function is already constant.  Hence every next-bit obstruction \(\Theta_m\) that is defined must vanish.

However, the complement orbit \(\{\{1\},\{2,3,4\}\}\) has total multiplicity
\[
        n_{\{1\}}+n_{\{2,3,4\}}=2.
\]
Dividing this sum by \(2\) and reducing modulo \(2\) gives a nonzero coefficient, even though the actual obstruction is zero.  The reason is that the two complementary traces cancel modulo constants: their oriented difference is \(n_{\{1\}}-n_{\{2,3,4\}}=0\).
\end{example}

Thus any valid carry theory must use the quotient class of \(\rho_R\), or equivalently oriented complement differences, rather than naive complement-orbit sums.

\begin{corollary}[Affine dyadic obstruction]\label{cor:affine-obstruction}
Let \(A\) be a \(q\)-modular witness with \(q=2^j\), let \(U\subseteq A\), and let \(R=A\setminus U\).  Let \(b_A:U\to \F_2\) be the top-bit label from Lemma~\ref{lem:affine-lift}.  Then \(U\) is \(2q\)-modular if and only if
\[
        [\rho_R-qb_A]=0
\]
in \((\Z/2q\Z)^U/\langle \one_U\rangle\).  Equivalently, the corrected tail function \(\rho_R-qb_A\) has zero obstruction class modulo constants.
\end{corollary}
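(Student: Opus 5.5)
This follows directly from Lemma~\ref{lem:affine-lift} applied with \(W=U\) and \(R=A\setminus U\). That lemma says \(U\) is \(2q\)-modular exactly when the function \(v\mapsto \rho_R(v)-qb_A(v)\) is constant modulo \(2q\) on \(U\). Here \(b_A\) is regarded as an integer-valued \(0/1\) function; then \(qb_A\) is well defined modulo \(2q\), since changing a representative of \(b_A(v)\) by \(2\) changes \(qb_A(v)\) by \(2q\). The remaining work is to translate ``constant modulo \(2q\) on \(U\)'' into ``zero class in \((\Z/2q\Z)^U/\langle\one_U\rangle\).''

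The translation proceeds in two steps. First, reduce the integer vector \(\rho_R-qb_A\in\Z^U\) coordinatewise modulo \(2q\) to obtain an element of \((\Z/2q\Z)^U\). Second, note that an element of \((\Z/2q\Z)^U\) lies in the cyclic submodule \(\langle\one_U\rangle\) if and only if it equals \(c\one_U\) for some \(c\in\Z/2q\Z\). That happens if and only if all its coordinates agree modulo \(2q\), i.e.\ if and only if the function is constant modulo \(2q\) on \(U\).

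It remains to check that the class is independent of the choice of lift \(d\) used to define \(b_A\). Changing \(d\) by \(q\) flips \(b_A\) by the constant \(1\), so \(qb_A\) changes by \(q\one_U\), which lies in \(\langle\one_U\rangle\). Changing \(d\) by \(2q\) does nothing. So the statement is well posed, as the paper already remarks after the definition of \(b_A\). The final ``equivalently'' sentence is just a restatement in the language of Proposition~\ref{prop:complement-difference} with \(s=j+1\), since \(2q=2^{j+1}\).

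I do not expect any real obstacle. The only points needing care are the well-definedness of \(qb_A\) modulo \(2q\) when \(b_A\) takes values in \(\F_2\), and the independence from \(d\). Both are settled by the observations above.
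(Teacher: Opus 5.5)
Your proof is correct and is essentially the paper's argument: apply Lemma~\ref{lem:affine-lift} with \(W=U\), then observe that constancy modulo \(2q\) on \(U\) is exactly vanishing of the class in \((\Z/2q\Z)^U/\langle\one_U\rangle\). Your extra checks (that \(qb_A\) is well defined modulo \(2q\), and that the class does not depend on the lift \(d\)) are fine additions. The appeal to Proposition~\ref{prop:complement-difference} for the final sentence is unnecessary, since that sentence only rewords the same condition.
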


\begin{proof}
This is exactly Lemma~\ref{lem:affine-lift} rewritten in quotient form.  The set \(U\) is \(2q\)-modular if and only if \(\rho_R(v)-qb_A(v)\) is constant modulo \(2q\) on \(U\).  That is equivalent to the vanishing of its class in the quotient by constants.
\end{proof}

\section{Relation to exact-size dyadic modular lifting}

The fixed-core certificates proved below are local results.  This section records why exact-size dyadic modular lifting is the relevant global benchmark, but the lifting principle itself is not assumed to hold in arbitrary graphs.

\begin{conjecture}[Polynomial dyadic lifting]\label{conj:polynomial-lifting}
There are constants \(C,a,C_0>0\) such that:
\begin{enumerate}[label=\textup{(\arabic*)}, leftmargin=*]
    \item every \(2\)-modular witness of size at least \(C_0m\) contains a \(4\)-modular witness of exactly \(m\) vertices;
    \item for every power of two \(q\ge 4\), every \(q\)-modular witness of size at least \(Cq^a m\) contains a \(2q\)-modular witness of exactly \(m\) vertices.
\end{enumerate}
\end{conjecture}

By Theorem~\ref{thm:conditional-threshold}, Conjecture~\ref{conj:polynomial-lifting} would imply \(F(n)/\log n\to\infty\).  The present paper does not prove this conjecture; it proves fixed-core absorption criteria that identify one concrete obstruction-elimination mechanism.

A more conservative version separates the first dyadic lift from the higher lifts.

\begin{conjecture}[Finite first-bit obstruction elimination]\label{conj:first-bit}
There exists a constant \(C_0>0\) such that every \(2\)-modular witness of size at least \(C_0m\) contains a \(4\)-modular witness of exactly \(m\) vertices.
\end{conjecture}

\begin{conjecture}[Higher-bit affine obstruction elimination]\label{conj:higher-bit}
There are constants \(C,a>0\) such that for every \(q=2^j\ge 4\), if \(A\) is a \(q\)-modular witness of size at least \(Cq^a m\), then some subset \(W\subseteq A\) with exactly \(m\) vertices satisfies the affine lift criterion of Lemma~\ref{lem:affine-lift}.
\end{conjecture}

The rest of the paper proves rigorous fixed-core mechanisms that supply this type of lift under explicit trace-availability hypotheses.  The point is not that those hypotheses are automatic.  Rather, they isolate the precise finite-rank and pair-trace conditions that an exchange or density argument would have to force in order to make the dyadic strategy work globally.

\section{A restricted positive theorem: \(q\)-fold twin-tail absorption}

This section records an unconditional lifting theorem under an explicit tail-duplication hypothesis.  It is deliberately modest: it does not claim that arbitrary modular witnesses have such tails.  Its value is that it proves one complete mechanism by which the dyadic obstruction can be killed.

\begin{definition}[\(q\)-fold twin tail over a core]\label{def:q-fold-tail}
Let \(A\) be a \(q\)-modular witness and let \(U\subseteq A\).  Put \(D=A\setminus U\).  We say that \(D\) is a \emph{\(q\)-fold twin tail over \(U\)} if \(D\) can be partitioned into blocks \(P_1,\ldots,P_t\), each of size \(q\), such that all vertices in \(P_i\) have the same neighborhood inside \(U\).  Write \(B_i\subseteq U\) for this common trace.

The tail \(D\) then contributes
\[
        \rho_D(u)=q\,|\{i:u\in B_i\}|
\]
for every \(u\in U\).
\end{definition}

\begin{theorem}[\(q\)-fold twin-tail absorption]\label{thm:twin-tail}
Let \(q\) be a power of two.  Let \(A\) be a \(q\)-modular witness, and write
\[
        \deg_A(v)\equiv d+qb_A(v)\pmod{2q}
\]
on \(A\), where \(b_A(v)\in\{0,1\}\).  Let \(U\subseteq A\) and put \(D=A\setminus U\).  Assume that \(D\) is a \(q\)-fold twin tail over \(U\), with block traces \(B_1,\ldots,B_t\).

If
\[
        b_A|_U-\sum_{i=1}^t \one_{B_i}
\]
is constant as a function on \(U\), equivalently if
\[
        [b_A|_U]=\sum_{i=1}^t[\one_{B_i}]
        \quad\text{in }\F_2^U/\langle \one_U\rangle,
\]
then \(U\) is \(2q\)-modular.  In particular, if \(|U|\le 2q\), then \(G[U]\) is regular.
\end{theorem}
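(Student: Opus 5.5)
The plan is to apply Lemma~\ref{lem:affine-lift} (equivalently Corollary~\ref{cor:affine-obstruction}) with \(W=U\) and deletion tail \(R=D=A\setminus U\). By that lemma, \(U\) is \(2q\)-modular if and only if \(v\mapsto \rho_D(v)-qb_A(v)\) is constant modulo \(2q\) on \(U\). So everything reduces to computing \(\rho_D\) from the block structure.

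By Definition~\ref{def:q-fold-tail}, for \(u\in U\) we have
\[
\rho_D(u)=q\,k(u),\qquad k(u)=|\{i:u\in B_i\}|=\sum_{i=1}^t \one_{B_i}(u).
\]
Hence
\[
\rho_D(u)-qb_A(u)=q\bigl(k(u)-b_A(u)\bigr).
\]
Multiplying an integer by \(q\) reduces questions modulo \(2q\) to questions modulo \(2\): \(q x\equiv q y \pmod{2q}\) if and only if \(x\equiv y\pmod 2\). Therefore the corrected tail function is constant modulo \(2q\) exactly when \(k-b_A|_U\) is constant modulo \(2\) on \(U\).

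Next I check that the two forms of the hypothesis agree and imply this. Reducing \(\sum_i\one_{B_i}\) modulo \(2\) gives \(k \bmod 2\). So the identity \([b_A|_U]=\sum_i[\one_{B_i}]\) in \(\F_2^U/\langle\one_U\rangle\) says precisely that \(b_A|_U-k\) is constant modulo \(2\). The first formulation, that \(b_A|_U-\sum_i\one_{B_i}\) is constant, I will read as a statement over \(\F_2\), i.e.\ modulo \(2\). If it is read integrally, it is stronger and still implies the mod-\(2\) version. Either way, Lemma~\ref{lem:affine-lift} yields that \(U\) is \(2q\)-modular.

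Finally, when \(|U|\le 2q\), Lemma~\ref{lem:terminal} applied with modulus \(2q\) gives that \(G[U]\) is regular. There is no real obstacle here. The only point needing care is the scaling step \(qx\equiv qy\pmod{2q}\iff x\equiv y\pmod 2\), together with checking that the choice of lift \(d\) only shifts \(b_A\) by a constant, which is already handled in the text preceding Lemma~\ref{lem:affine-lift}.
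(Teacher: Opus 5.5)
Your proposal is correct and follows the paper's argument: the paper computes \(\deg_U(u)\equiv d+q\bigl(b_A(u)-\sum_i\one_{B_i}(u)\bigr)\pmod{2q}\) directly from \(\deg_U=\deg_A-\rho_D\) and \(\rho_D=q\sum_i\one_{B_i}\). That computation is exactly the content of Lemma~\ref{lem:affine-lift}, which you invoke, and the paper then finishes with Lemma~\ref{lem:terminal} just as you do.
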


\begin{proof}
For \(u\in U\), the degree of \(u\) after deleting \(D\) is
\[
        \deg_U(u)=\deg_A(u)-\rho_D(u).
\]
Since \(A\) is \(q\)-modular, we may write
\[
        \deg_A(u)\equiv d+qb_A(u)\pmod{2q}.
\]
Since \(D\) is a \(q\)-fold twin tail,
\[
        \rho_D(u)=q\sum_{i=1}^t \one_{B_i}(u).
\]
Therefore
\[
        \deg_U(u)\equiv
        d+q\left(b_A(u)-\sum_{i=1}^t\one_{B_i}(u)\right)
        \pmod{2q}.
\]
By assumption, the expression in parentheses is constant modulo \(2\) as \(u\) varies in \(U\).  Hence \(\deg_U(u)\) is constant modulo \(2q\) on \(U\).  Thus \(U\) is \(2q\)-modular.  If \(|U|\le 2q\), then Lemma~\ref{lem:terminal} applies.
\end{proof}

\begin{corollary}[The first-bit twin-pair case]\label{cor:first-bit-pair}
Let \(A\) be a \(2\)-modular witness.  Suppose \(U\subseteq A\) and \(D=A\setminus U\) can be partitioned into pairs of vertices that have identical neighborhoods inside \(U\).  For each pair \(P_i\), let \(B_i\) be its common trace on \(U\).  Write
\[
        \deg_A(v)\equiv d+2b_A(v)\pmod 4.
\]
If
\[
        [b_A|_U]=\sum_i[\one_{B_i}]
        \quad\text{in }\F_2^U/\langle \one_U\rangle,
\]
then \(U\) is \(4\)-modular.
\end{corollary}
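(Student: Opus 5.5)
This is the case \(q=2\) of Theorem~\ref{thm:twin-tail}, and the plan is to obtain it by direct specialization rather than a new argument.

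First, check that every hypothesis of Theorem~\ref{thm:twin-tail} holds with \(q=2\).
\begin{itemize}[leftmargin=*]
    \item \(q=2\) is a power of two.
    \item \(A\) is a \(2\)-modular witness.
    \item The assumed partition of \(D=A\setminus U\) into pairs with identical traces on \(U\) is exactly a partition into blocks of size \(q=2\) whose vertices share a common neighborhood inside \(U\). So \(D\) is a \(2\)-fold twin tail over \(U\) in the sense of Definition~\ref{def:q-fold-tail}, with block traces \(B_i\).
    \item The congruence \(\deg_A(v)\equiv d+2b_A(v)\pmod 4\) is the top-bit decomposition of that theorem with \(q=2\).
    \item The displayed quotient identity \([b_A|_U]=\sum_i[\one_{B_i}]\) in \(\F_2^U/\langle\one_U\rangle\) is the second, equivalent form of the theorem's hypothesis.
\end{itemize}
The theorem then gives that \(U\) is \(2q=4\)-modular.

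For robustness, I would also record the one-line direct computation. For \(u\in U\) we have
\[
        \deg_U(u)=\deg_A(u)-\rho_D(u),
        \qquad
        \rho_D(u)=2\sum_i\one_{B_i}(u),
\]
since each pair contributes \(2\) to a vertex it sees. Hence
\[
        \deg_U(u)\equiv d+2\Bigl(b_A(u)-\sum_i\one_{B_i}(u)\Bigr)\pmod 4.
\]
The quotient hypothesis says the bracketed function is constant modulo \(2\) on \(U\), so \(\deg_U\) is constant modulo \(4\).

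There is no real obstacle. The only point to state explicitly is the equivalence between the two forms of the hypothesis in Theorem~\ref{thm:twin-tail}. A function on \(U\) with values in \(\F_2\) lies in \(\langle\one_U\rangle\) exactly when it is constant. Moreover, multiplying a function that is constant modulo \(2\) by \(2\) yields a function that is constant modulo \(4\).
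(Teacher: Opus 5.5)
Your proposal is correct and matches the paper, which states this corollary without a separate proof and treats it as the case \(q=2\) of Theorem~\ref{thm:twin-tail}. Your hypothesis check and your direct computation \(\deg_U(u)\equiv d+2\bigl(b_A(u)-\sum_i\one_{B_i}(u)\bigr)\pmod 4\) reproduce exactly the argument in that theorem's proof.
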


This is the precise version of the duplicated-trace absorption principle.  Equal-trace pairs are lower-neutral modulo \(2\), but they can alter the top bit modulo \(4\).  If their trace vectors span the top-bit defect, deleting the corresponding pairs synchronizes the degrees modulo \(4\).

\begin{corollary}[A checkable restricted lifting condition]\label{cor:restricted-lifting}
Fix \(q=2^j\).  Suppose every \(q\)-modular witness \(A\) of size at least \(L(m,q)\) contains a subset \(U\) of exactly \(m\) vertices such that \(A\setminus U\) is a \(q\)-fold twin tail over \(U\) and
\[
        [b_A|_U]=\sum_i[\one_{B_i}]
        \quad\text{in }\F_2^U/\langle \one_U\rangle.
\]
Then every such witness contains a \(2q\)-modular witness of exactly \(m\) vertices.

Consequently, if this condition holds with \(L(m,q)\le Cq^a m\) for all \(q\ge 4\), and if the analogous first-bit pair condition holds with fixed linear loss and exact output size, then Theorem~\ref{thm:conditional-threshold} applies and \(F(n)/\log n\to\infty\).
\end{corollary}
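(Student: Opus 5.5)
The first assertion follows directly from Theorem~\ref{thm:twin-tail}. Fix \(q=2^j\) and a \(q\)-modular witness \(A\) with \(|A|\ge L(m,q)\). By hypothesis there is a set \(U\subseteq A\) with \(|U|=m\) such that \(D=A\setminus U\) is a \(q\)-fold twin tail over \(U\), with block traces \(B_i\), and
\[
        [b_A|_U]=\sum_i[\one_{B_i}]
        \quad\text{in }\F_2^U/\langle \one_U\rangle .
\]
This equality is exactly the hypothesis of Theorem~\ref{thm:twin-tail}. That theorem then shows that \(U\) is \(2q\)-modular. Since \(|U|=m\), the set \(U\) is a \(2q\)-modular witness of exactly \(m\) vertices, as required.

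We also check that the top-bit label is well defined. In Theorem~\ref{thm:twin-tail} it is computed with respect to some lift \(d\). Changing \(d\) alters \(b_A\) by a constant function, so the class \([b_A|_U]\) does not change. Hence the hypothesis of the corollary does not depend on the lift chosen.

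For the consequence, we produce the two lifting statements of Theorem~\ref{thm:conditional-threshold}.

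\begin{itemize}[leftmargin=*]
    \item \textbf{Statement (2).} Take \(q\ge 4\) and a \(q\)-modular witness of size at least \(Cq^am\). Since \(L(m,q)\le Cq^am\), the witness has size at least \(L(m,q)\). The first paragraph then yields a \(2q\)-modular witness of exactly \(m\) vertices.
    \item \textbf{Statement (1).} Here we read the analogous first-bit pair condition as saying the following. Every \(2\)-modular witness of size at least \(C_0m\) contains a set \(U\) of exactly \(m\) vertices whose complement splits into equal-trace pairs whose traces represent \([b_A|_U]\). Corollary~\ref{cor:first-bit-pair}, which is the case \(q=2\) of Theorem~\ref{thm:twin-tail}, then gives a \(4\)-modular witness of exactly \(m\) vertices.
\end{itemize}

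With both statements in hand, Theorem~\ref{thm:conditional-threshold} gives \(M(2^r,2^r)\le 2^{A(r+1)^2}\), and therefore \(F(n)/\log n\to\infty\).

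The argument is bookkeeping, and the only point that needs care is the exact output size. The conditional theorem requires witnesses of exactly \(m\) vertices, because its proof chains these lifts down to a witness of order \(2^r\) with modulus \(2^r\) and then applies Lemma~\ref{lem:terminal}. The hypothesis of the corollary fixes \(|U|=m\) exactly, and this is precisely the output size the conditional theorem needs.
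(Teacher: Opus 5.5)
Your proposal is correct and takes the route the paper intends. The paper gives no separate proof; it treats the first assertion as an immediate application of Theorem~\ref{thm:twin-tail} to a core \(U\) with \(|U|=m\), and the consequence as a direct check of the two lifting hypotheses of Theorem~\ref{thm:conditional-threshold}, with Corollary~\ref{cor:first-bit-pair} supplying the \(q=2\) step, exactly as you argue, including your correct emphasis on the exact output size \(|U|=m\) needed for Lemma~\ref{lem:terminal}.
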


This corollary gives a concrete finite target for the obstruction program: find \(q\)-fold same-trace tail blocks whose trace vectors represent the top-bit label of an exactly \(m\)-vertex retained core.

\begin{corollary}[Span certificate for one dyadic lift]\label{cor:span-certificate}
Let \(q\) be a power of two, let \(A\) be a \(q\)-modular witness, and let \(U\subseteq A\).  Put \(D=A\setminus U\).  Assume \(D\) is a \(q\)-fold twin tail over \(U\), with block traces \(B_1,\ldots,B_t\).  If the vectors \([\one_{B_i}]\) span \(\F_2^U/\langle \one_U\rangle\), then every top-bit defect on \(U\) can be corrected on the core by deleting a suitable subcollection of the \(q\)-fold blocks.

More precisely, there is a subcollection \(I\subseteq \{1,\ldots,t\}\) such that, if \(E=\bigcup_{i\in I}P_i\) and \(W=A\setminus E\), then the degrees of the vertices of \(U\) inside \(G[W]\) are congruent modulo \(2q\).  If the remaining vertices \(W\setminus U\) also have that same residue, then \(W\) itself is \(2q\)-modular.

If one wants the retained set to be exactly \(U\), then all blocks of \(D\) are deleted.  In that case span is not enough.  The required condition is the all-block identity
\[
        [b_A|_U]=\sum_{i=1}^t[\one_{B_i}]
        \quad\text{in }\F_2^U/\langle \one_U\rangle.
\]
\end{corollary}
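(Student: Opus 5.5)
The plan is to repeat the computation from the proof of Theorem~\ref{thm:twin-tail}, but deleting only a subcollection of blocks, and then to use the spanning hypothesis to choose that subcollection. For any \(I\subseteq\{1,\ldots,t\}\), put \(E=\bigcup_{i\in I}P_i\) and \(W=A\setminus E\). For \(u\in U\subseteq W\) we have
\[
        \deg_W(u)=\deg_A(u)-\rho_E(u)\equiv d+q\Bigl(b_A(u)-\sum_{i\in I}\one_{B_i}(u)\Bigr)\pmod{2q}.
\]
This uses \(\rho_E(u)=q\,|\{i\in I:u\in B_i\}|\), which holds because each block has size \(q\) and all its vertices share the trace \(B_i\). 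Hence the degrees on \(U\) in \(G[W]\) are congruent modulo \(2q\) exactly when \(b_A|_U-\sum_{i\in I}\one_{B_i}\) is constant modulo \(2\) on \(U\). In quotient language, the condition is \([b_A|_U]=\sum_{i\in I}[\one_{B_i}]\) in \(\F_2^U/\langle\one_U\rangle\).

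The main step is to produce \(I\). The classes \([\one_{B_i}]\) span the quotient, so \([b_A|_U]\) is an \(\F_2\)-linear combination \(\sum_i \epsilon_i[\one_{B_i}]\) with \(\epsilon_i\in\F_2\). Over \(\F_2\), every linear combination is a subset sum, so I take \(I=\{i:\epsilon_i=1\}\). The class \([b_A|_U]\) is well defined because changing \(d\) shifts \(b_A\) by a constant. This choice gives the congruence of the core degrees modulo \(2q\), and since the defect \(b_A|_U\) was arbitrary, every top-bit defect on \(U\) is corrected.

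For the clause about \(W\) itself: if every vertex of \(W\setminus U\) also has \(\deg_W\) in the common residue, then all of \(W\) shares one residue modulo \(2q\). That is precisely the definition of \(W\) being \(2q\)-modular, and nothing more needs to be checked.

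For the final remark, take \(I=\{1,\ldots,t\}\). Then \(W=U\), and the criterion above becomes exactly the all-block identity, which recovers Theorem~\ref{thm:twin-tail}. To show that span is not sufficient, I would note that the span hypothesis only guarantees some subset sum equal to \([b_A|_U]\), not the full sum. A small example makes this concrete: take two blocks with the same nonconstant trace \(B\) and defect \(b_A|_U=\one_B\). The blocks span the relevant class, yet the full sum is \(0\neq[\one_B]\).

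The only delicate point is being careful that the retained tail \(D\setminus E\) affects the core degrees only through the deleted blocks. I expect no real obstacle beyond bookkeeping.
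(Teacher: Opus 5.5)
Your proposal is correct and follows essentially the same route as the paper: write $[b_A|_U]=\sum_i\varepsilon_i[\one_{B_i}]$, take $I=\{i:\varepsilon_i=1\}$, and note that deleting those blocks shifts each core degree by $q\sum_{i\in I}\one_{B_i}(u)$, which modulo $2q$ changes only the top bit; the clause about $W$ is then just the definition of $2q$-modularity. One small caveat on your ``span is not enough'' example: two blocks with a single trace $B$ span all of $\F_2^U/\langle\one_U\rangle$ only when $|U|=2$, so either take $|U|=2$ or use two blocks for each trace of a spanning family, which meets the corollary's spanning hypothesis while keeping the full-block sum equal to $0$.
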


\begin{proof}
The span assumption gives coefficients \(\varepsilon_i\in\F_2\) such that
\[
        [b_A|_U]=\sum_i\varepsilon_i[\one_{B_i}]
        \quad\text{in }\F_2^U/\langle \one_U\rangle.
\]
Choose \(I=\{i:\varepsilon_i=1\}\).  Deleting the \(q\)-fold blocks in \(I\) changes the degree of a vertex \(u\in U\) by
\[
        q\sum_{i\in I}\one_{B_i}(u).
\]
Modulo \(2q\) this changes exactly the top bit of the induced degree on \(U\) and leaves the lower residue modulo \(q\) unchanged.  Therefore the top-bit label on \(U\) becomes constant.  This proves the congruence claim for the core vertices.  The statement about \(W\) follows only after checking the self-layer residues of the remaining tail vertices, which is why it is stated as an additional hypothesis.
\end{proof}

\begin{corollary}[Algorithmic verification of the absorption condition]\label{cor:algorithmic-verification}
For fixed \(U\), the span-certificate condition of Corollary~\ref{cor:span-certificate} is decidable by Gaussian elimination over \(\F_2\).  With columns the quotient classes \([\one_{B_i}]\) of the \(q\)-fold block traces and target \([b_A|_U]\), a core correction exists exactly when
\[
        M\varepsilon=[b_A|_U]
\]
has a solution.  The stronger all-tail deletion used in Theorem~\ref{thm:twin-tail} is the special case \(\varepsilon=(1,\ldots,1)\).  Section~\ref{sec:algorithmic-certificate} spells out the corresponding success and failure certificates.
\end{corollary}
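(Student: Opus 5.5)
The plan is to reduce the statement to solving one linear system over \(\F_2\) in a fixed finite-dimensional quotient space, and then invoke Gaussian elimination.

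First I will fix coordinates on \(\F_2^U/\langle \one_U\rangle\). Pick a base vertex \(u_0\in U\). The map \(x\mapsto (x_u+x_{u_0})_{u\in U\setminus\{u_0\}}\) is linear from \(\F_2^U\) to \(\F_2^{|U|-1}\), its kernel is exactly \(\langle\one_U\rangle\), and it is surjective. So it identifies the quotient with \(\F_2^{|U|-1}\). In these coordinates each class \([\one_{B_i}]\) becomes an explicit column. I will form the \((|U|-1)\times t\) matrix \(M\) whose \(i\)th column is the image of \(\one_{B_i}\), and compute the target vector, the image of \(b_A|_U\). This target depends only on \(\deg_A\) modulo \(2q\) on \(U\). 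It is independent of the lift \(d\), because changing \(d\) changes \(b_A\) by a constant, and the remark after the definition of \(b_A\) shows constants vanish in the quotient.

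Second, I will verify the equivalence. The proof of Corollary~\ref{cor:span-certificate} already shows the following. Deleting the blocks indexed by \(I=\{i:\varepsilon_i=1\}\) changes \(\deg(u)\) for \(u\in U\) by \(q\sum_i\varepsilon_i\one_{B_i}(u)\). Hence the core degrees become congruent modulo \(2q\) if and only if \(b_A|_U-\sum_i\varepsilon_i\one_{B_i}\) is constant modulo \(2\) on \(U\). This is exactly \(M\varepsilon=[b_A|_U]\) in the coordinates above. So a core correction, meaning a choice of subcollection of blocks, exists exactly when the system has a solution. The span hypothesis of Corollary~\ref{cor:span-certificate} is the special case in which \(M\) has full row rank \(|U|-1\). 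In that case the system is solvable for every target.

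Third, decidability follows from row reducing the augmented matrix \([M\mid [b_A|_U]]\). This takes \(O(|U|^2t)\) field operations. The system is consistent if and only if no row reduces to \((0,\dots,0\mid 1)\). When it is consistent, back-substitution yields an explicit \(\varepsilon\), and hence the deletion set \(E\). When it is inconsistent, the row combination that produced the bad row is a functional vanishing on the columns but not on the target. I would defer that failure certificate to Section~\ref{sec:algorithmic-certificate}.

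Finally, the all-tail statement of Theorem~\ref{thm:twin-tail} corresponds to deleting every block, that is \(\varepsilon=(1,\dots,1)\). Checking it is the single evaluation \(M\one=[b_A|_U]\).

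There is no deep obstacle here. The only point needing care is that the quotient coordinates are legitimate, namely that the kernel of the coordinate map is exactly \(\langle\one_U\rangle\). With that established, solvability of \(M\varepsilon=[b_A|_U]\) in \(\F_2^{|U|-1}\) matches solvability in the quotient.
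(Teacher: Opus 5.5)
Your proposal is correct and follows essentially the paper's route: the block-deletion computation from Corollary~\ref{cor:span-certificate}, the base-vertex coordinates \((x(u)+x(u_0))_{u\ne u_0}\) on \(\F_2^U/\langle\one_U\rangle\), Gaussian elimination, and the inconsistency functional of Section~\ref{sec:algorithmic-certificate}. One small attribution point: the proof of Corollary~\ref{cor:span-certificate} gives only the ``if'' direction, and the ``only if'' direction is the content of Theorem~\ref{thm:core-correction}; your congruence \(\deg_{W_I}(u)\equiv d+q(b_A(u)-\sum_{i\in I}\one_{B_i}(u))\pmod{2q}\) already establishes both directions, so nothing is missing.
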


\begin{theorem}[Complete core-correction criterion]\label{thm:core-correction}
Let \(q\) be a power of two, let \(A\) be a \(q\)-modular witness, and choose a lift \(d\) modulo \(2q\) of the common degree residue modulo \(q\).  Let \(b_A:A\to\F_2\) be defined by
\[
        \deg_A(v)\equiv d+qb_A(v)\pmod{2q}.
\]
Let \(U\subseteq A\), and suppose \(D=A\setminus U\) is a \(q\)-fold twin tail over \(U\) with blocks \(P_1,\ldots,P_t\) and traces \(B_1,\ldots,B_t\).  For a subcollection \(I\subseteq\{1,\ldots,t\}\), put
\[
        E_I=\bigcup_{i\in I}P_i,
        \qquad
        W_I=A\setminus E_I.
\]
Then the vertices of \(U\) have congruent degrees modulo \(2q\) inside \(G[W_I]\) if and only if
\[
        [b_A|_U]=\sum_{i\in I}[\one_{B_i}]
        \quad\text{in }\F_2^U/\langle \one_U\rangle.
\]
Consequently, there exists some subcollection of \(q\)-fold twin blocks whose deletion synchronizes the core \(U\) modulo \(2q\) if and only if
\[
        [b_A|_U]\in \Span\{[\one_{B_1}],\ldots,[\one_{B_t}]\}.
\]
If all blocks are deleted, so that \(W_{\{1,\ldots,t\}}=U\), then \(U\) is \(2q\)-modular if and only if the all-block identity
\[
        [b_A|_U]=\sum_{i=1}^t[\one_{B_i}]
\]
holds in \(\F_2^U/\langle \one_U\rangle\).
\end{theorem}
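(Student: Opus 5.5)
The plan is a direct computation modulo \(2q\) on the core, following the proof of Theorem~\ref{thm:twin-tail} but with deletion restricted to the subcollection \(I\).

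For \(u\in U\) we have \(\deg_{W_I}(u)=\deg_A(u)-|N(u)\cap E_I|\). Each block \(P_i\) has size \(q\) and every vertex of \(P_i\) has trace \(B_i\) on \(U\). Hence \(|N(u)\cap E_I|=q\sum_{i\in I}\one_{B_i}(u)\). Substituting the defining congruence for \(b_A\) gives
\[
\deg_{W_I}(u)\equiv d+q\Bigl(b_A(u)-\sum_{i\in I}\one_{B_i}(u)\Bigr)\pmod{2q}.
\]

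Next I show that this quantity is constant modulo \(2q\) on \(U\) exactly when the parenthesized integer is constant modulo \(2\). The key fact is that for integers \(x,y\) we have \(qx\equiv qy\pmod{2q}\) if and only if \(x\equiv y\pmod 2\). Applying this pointwise, the core degrees in \(G[W_I]\) are congruent modulo \(2q\) if and only if \(b_A|_U-\sum_{i\in I}\one_{B_i}\), reduced to \(\F_2^U\), lies in \(\langle\one_U\rangle\). This is precisely the displayed identity \([b_A|_U]=\sum_{i\in I}[\one_{B_i}]\) in the quotient. The only point needing care is that both directions use this cancellation of the factor \(q\) modulo \(2q\).

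The span statement then follows by quantifying over \(I\). Sums over subcollections are exactly the \(\F_2\)-linear combinations \(\sum_i\varepsilon_i[\one_{B_i}]\) with \(\varepsilon_i\in\F_2\), via \(I=\{i:\varepsilon_i=1\}\). So some subcollection works if and only if \([b_A|_U]\) lies in the span. For the last claim, take \(I=\{1,\ldots,t\}\). Then \(E_I=D\) and \(W_I=U\), so the condition on core degrees inside \(G[W_I]\) says exactly that \(U\) is \(2q\)-modular. I expect no real obstacle here; the main risk is bookkeeping, namely remembering that the choice of the lift \(d\) only shifts \(b_A\) by a constant, so the quotient condition is independent of \(d\).
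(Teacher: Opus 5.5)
Your proposal is correct and follows the paper's proof essentially step for step. Both compute \(\deg_{W_I}(u)\equiv d+q\bigl(b_A(u)-\sum_{i\in I}\one_{B_i}(u)\bigr)\pmod{2q}\), reduce constancy modulo \(2q\) to constancy modulo \(2\) of the bracketed expression (via \(qx\equiv qy\pmod{2q}\iff x\equiv y\pmod 2\), which you spell out and the paper leaves implicit), vary \(I\) to get the span criterion, and take \(I=\{1,\ldots,t\}\) for the all-block identity.
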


\begin{proof}
For \(u\in U\), deleting \(E_I\) from \(A\) changes the degree of \(u\) by
\[
        q\sum_{i\in I}\one_{B_i}(u),
\]
because every block \(P_i\) has \(q\) vertices and all vertices in \(P_i\) have trace \(B_i\) on \(U\).  Hence
\[
        \deg_{W_I}(u)\equiv
        d+q\left(b_A(u)-\sum_{i\in I}\one_{B_i}(u)\right)
        \pmod{2q}.
\]
The vertices of \(U\) have congruent degrees modulo \(2q\) inside \(G[W_I]\) exactly when the expression in parentheses is constant modulo \(2\) on \(U\).  This is equivalent to the displayed quotient identity.  The span criterion follows by allowing \(I\) to vary.  The all-block statement is the special case \(I=\{1,\ldots,t\}\), where \(W_I=U\).
\end{proof}

\begin{corollary}[Self-layer completion criterion]\label{cor:self-layer}
Under the hypotheses of Theorem~\ref{thm:core-correction}, fix a subcollection \(I\) satisfying
\[
        [b_A|_U]=\sum_{i\in I}[\one_{B_i}].
\]
Let \(W_I=A\setminus\bigcup_{i\in I}P_i\).  Then \(W_I\) is \(2q\)-modular if and only if every vertex \(y\in W_I\setminus U\) has the same degree modulo \(2q\) inside \(G[W_I]\) as the common residue already obtained on \(U\).

Equivalently, after solving the linear system for the core, the remaining obstruction is exactly the finite list of self-layer congruences for the vertices retained outside \(U\).
\end{corollary}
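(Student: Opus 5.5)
The plan is to reduce the statement to the definition of \(2q\)-modularity and then apply the core computation already done in Theorem~\ref{thm:core-correction}. By definition, \(W_I\) is \(2q\)-modular exactly when all vertices of \(W_I\) have the same induced degree modulo \(2q\). Split \(W_I=U\sqcup(W_I\setminus U)\); this is a disjoint union because \(W_I\supseteq U\) (only tail blocks are deleted). The fixed subcollection \(I\) satisfies \([b_A|_U]=\sum_{i\in I}[\one_{B_i}]\). By Theorem~\ref{thm:core-correction}, the vertices of \(U\) therefore already share a common residue \(r\) modulo \(2q\) inside \(G[W_I]\). Explicitly, \(r\equiv d+qc\), where \(c\) is the constant value of \(b_A-\sum_{i\in I}\one_{B_i}\) on \(U\) modulo \(2\).

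For the forward direction, suppose \(W_I\) is \(2q\)-modular. Then every \(y\in W_I\setminus U\) has degree congruent to that of any fixed \(u\in U\), namely \(r\). One degenerate point should be noted: if \(U=\varnothing\), there is no common residue on \(U\), and the condition should be read as the vertices of \(W_I\setminus U\) sharing a common residue. I would either remark on this or assume \(U\neq\varnothing\), as is implicit in the statement.

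For the converse, suppose each \(y\in W_I\setminus U\) has \(\deg_{W_I}(y)\equiv r\pmod{2q}\). Every vertex of \(U\) also has residue \(r\), so all vertices of \(W_I\) are congruent modulo \(2q\), and \(W_I\) is \(2q\)-modular. The "equivalently" sentence then follows by unpacking this. Once the linear system for the core is solved, the core congruences hold automatically. The only remaining conditions are the \(|W_I\setminus U|\) congruences \(\deg_{W_I}(y)\equiv r\), one for each retained tail vertex, and this is a finite list.

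There is no real obstacle here. The one point needing care is that the common residue on \(U\) is well defined and independent of the choice of lift \(d\). I would check this directly: changing \(d\) by \(q\) flips \(b_A\) by a constant, which is absorbed into \(c\), so \(r\) itself is unchanged. I might also note that each self-layer degree can be written as \(\deg_{W_I}(y)=\deg_A(y)-|N(y)\cap E_I|\), which makes the list concrete. This is not required for the equivalence.
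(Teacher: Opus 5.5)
Your proposal is correct and follows the paper's own argument: Theorem~\ref{thm:core-correction} gives a common residue on \(U\) inside \(G[W_I]\). By definition, \(2q\)-modularity of \(W_I\) then reduces to every retained vertex of \(W_I\setminus U\) having that same residue. Your extra remarks, on the degenerate case \(U=\varnothing\) and on the residue not depending on the lift \(d\), are accurate refinements that the paper leaves implicit.
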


\begin{proof}
By Theorem~\ref{thm:core-correction}, the vertices of \(U\) already have one common degree residue modulo \(2q\) inside \(G[W_I]\).  The set \(W_I\) is \(2q\)-modular exactly when all remaining vertices in \(W_I\setminus U\) have that same residue.
\end{proof}

\begin{theorem}[Trace-multiplicity form of twin-block absorption]\label{thm:trace-multiplicity}
Let \(q\) be a power of two, let \(A\) be a \(q\)-modular witness, and let \(U\subseteq A\).  For each trace \(B\subseteq U\), let
\[
        n_B=|\{x\in A\setminus U:N(x)\cap U=B\}|.
\]
Choose a lift \(d\) modulo \(2q\) of the common \(q\)-residue of degrees in \(A\), and write
\[
        \deg_A(v)\equiv d+qb_A(v)\pmod{2q}.
\]
Then the following hold.
\begin{enumerate}[label=\textup{(\arabic*)}, leftmargin=*]
    \item The core \(U\) can be synchronized modulo \(2q\) by deleting a subcollection of disjoint equal-trace \(q\)-tuples from \(A\setminus U\) if and only if
    \[
        [b_A|_U]\in \Span\{[\one_B]:n_B\ge q\}
    \]
    inside \(\F_2^U/\langle \one_U\rangle\).
    \item If \(n_B\) is divisible by \(q\) for every \(B\), so that \(A\setminus U\) is a \(q\)-fold twin tail over \(U\), then deleting the whole tail and retaining exactly \(U\) makes \(U\) \(2q\)-modular if and only if
    \[
        [b_A|_U]=\sum_B\left(\frac{n_B}{q}\pmod 2\right)[\one_B]
    \]
    inside \(\F_2^U/\langle \one_U\rangle\).
\end{enumerate}
\end{theorem}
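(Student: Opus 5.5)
The plan is to reduce both parts to Theorem~\ref{thm:core-correction}, reading a family of disjoint equal-trace \(q\)-tuples as a \(q\)-fold twin-block structure on the part of the tail that is deleted. The one subtlety is that Theorem~\ref{thm:core-correction} assumes the whole of \(A\setminus U\) is a \(q\)-fold twin tail, whereas in part (1) only the deleted vertices are grouped into blocks. I would therefore not invoke the theorem verbatim. Instead I would repeat its one-line computation: if \(E\) is a disjoint union of \(q\)-tuples \(P_1,\dots,P_k\) with traces \(B_{i_1},\dots,B_{i_k}\), then for \(u\in U\)
\[
\deg_{A\setminus E}(u)=\deg_A(u)-q\sum_{j=1}^k\one_{B_{i_j}}(u)\equiv d+q\Bigl(b_A(u)-\sum_{j}\one_{B_{i_j}}(u)\Bigr)\pmod{2q}.
\]
Consequently, \(U\) is synchronized modulo \(2q\) in \(G[A\setminus E]\) if and only if \([b_A|_U]=\sum_j[\one_{B_{i_j}}]\) in \(\F_2^U/\langle\one_U\rangle\). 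Note that the retained tail vertices do not enter this computation at all.

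For the forward direction of (1), every deleted tuple consists of \(q\) distinct vertices of \(A\setminus U\) with a common trace \(B\). Hence \(n_B\ge q\) for each trace \(B\) that actually occurs, and the identity above exhibits \([b_A|_U]\) as a sum of classes \([\one_B]\) with \(n_B\ge q\).

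For the converse, write \([b_A|_U]=\sum_{B\in\mathcal S}[\one_B]\) with \(\mathcal S\subseteq\{B:n_B\ge q\}\). This is always possible because we are working over \(\F_2\): any expression in the span reduces to a sum over a set of distinct traces, each with coefficient \(1\). For each \(B\in\mathcal S\) choose \(q\) vertices of trace \(B\). Tuples chosen for different traces are automatically disjoint, since each tail vertex has a unique trace, and within each tuple the \(q\) vertices are distinct by construction. Deleting these tuples synchronizes \(U\) by the displayed identity. The only point needing care is exactly this reduction to distinct traces, which avoids having to use \(n_B\ge 2q\) when a trace might otherwise appear twice.

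For (2), if \(q\mid n_B\) for every \(B\), partition each trace class into \(n_B/q\) blocks of size \(q\). This makes \(A\setminus U\) a \(q\)-fold twin tail over \(U\) in the sense of Definition~\ref{def:q-fold-tail}. The all-block case of Theorem~\ref{thm:core-correction} then states that \(U\) is \(2q\)-modular if and only if \([b_A|_U]=\sum_i[\one_{B_i}]\). Grouping the blocks by trace, each \(B\) occurs \(n_B/q\) times, and over \(\F_2\) its contribution is \((n_B/q \bmod 2)[\one_B]\), which gives the stated identity. I do not expect any real obstacle here. The only care required is the disjointness and distinct-trace bookkeeping in part (1), together with noting that the computation for Theorem~\ref{thm:core-correction} never used the undeleted part of the tail.
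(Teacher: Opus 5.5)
Your proposal is correct and follows the paper's proof: part (1) uses the degree change $q\one_B$ per deleted equal-trace $q$-tuple, and part (2) groups the $n_B/q$ blocks of each trace and invokes the all-block case of Theorem~\ref{thm:core-correction}. You also redo the computation in part (1) instead of citing Theorem~\ref{thm:core-correction}, whose hypothesis requires the whole tail to be $q$-fold, and you reduce to distinct traces to get disjointness; both steps make explicit what the paper's shorter argument leaves implicit.
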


\begin{proof}
For part \textup{(1)}, deleting \(q\) vertices of the same trace \(B\) changes the degree of every vertex \(u\in U\) by \(q\one_B(u)\).  Modulo \(2q\), only the parity of the number of deleted \(q\)-tuples of trace \(B\) matters.  Such a \(q\)-tuple is available exactly when \(n_B\ge q\).  Hence the set of all possible top-bit corrections on the core is precisely the span of the classes \([\one_B]\) with \(n_B\ge q\).  The core can be synchronized exactly when the top-bit defect \([b_A|_U]\) lies in this span.

For part \textup{(2)}, if \(n_B\) is divisible by \(q\) for every \(B\), then deleting the whole tail deletes exactly \(n_B/q\) blocks of trace \(B\).  The induced top-bit correction is therefore
\[
        \sum_B\left(\frac{n_B}{q}\pmod 2\right)[\one_B].
\]
The whole-tail deletion makes \(U\) \(2q\)-modular exactly when this correction equals \([b_A|_U]\), by Theorem~\ref{thm:core-correction}.
\end{proof}

\begin{theorem}[Rank-rich reservoir absorption]\label{thm:rank-rich}
Let \(q\) be a power of two, let \(A\) be a \(q\)-modular witness, and let \(\varnothing\ne U\subseteq A\).  For each trace \(B\subseteq U\), let \(n_B\) be the number of vertices in \(A\setminus U\) with trace \(B\) on \(U\).  Suppose that the available trace classes
\[
        \{[\one_B]:n_B\ge q\}
\]
span \(\F_2^U/\langle \one_U\rangle\).  Then every top-bit defect on \(U\) can be synchronized modulo \(2q\) by deleting disjoint equal-trace \(q\)-tuples from \(A\setminus U\).  Moreover, the deletion can be chosen to use at most \(|U|-1\) such \(q\)-tuples, and hence at most \(q(|U|-1)\) tail vertices.

If, after this deletion, every retained vertex outside \(U\) has the common residue already obtained on \(U\), then the whole retained set is \(2q\)-modular.  The required deletion set can be found by Gaussian elimination over \(\F_2\).
\end{theorem}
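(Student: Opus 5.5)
The plan is to reduce everything to Theorem~\ref{thm:trace-multiplicity}(1) plus a dimension count. First, since \(U\ne\varnothing\), the quotient space \(V=\F_2^U/\langle\one_U\rangle\) has dimension \(|U|-1\). By hypothesis the set \(\mathcal S=\{[\one_B]:n_B\ge q\}\) spans \(V\), so in particular \([b_A|_U]\in\Span\mathcal S\). Theorem~\ref{thm:trace-multiplicity}(1) then immediately gives a subcollection of disjoint equal-trace \(q\)-tuples in \(A\setminus U\) whose deletion synchronizes the core modulo \(2q\). This settles existence for every top-bit defect; since \(\mathcal S\) spans all of \(V\), the conclusion holds regardless of the choice of lift \(d\) and of the particular label \(b_A\).

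For the size bound, I will extract from \(\mathcal S\) a basis \(\{[\one_{B_1}],\ldots,[\one_{B_k}]\}\) of \(V\), with \(k=|U|-1\) and distinct traces \(B_1,\ldots,B_k\) each satisfying \(n_{B_i}\ge q\). Expanding \([b_A|_U]=\sum_i\varepsilon_i[\one_{B_i}]\) with \(\varepsilon_i\in\F_2\), I delete one \(q\)-tuple of trace \(B_i\) for each \(i\) with \(\varepsilon_i=1\). Such a tuple exists because \(n_{B_i}\ge q\), and tuples for distinct traces are automatically disjoint. Following the computation in the proof of Theorem~\ref{thm:core-correction}, each \(u\in U\) then has degree congruent to \(d+q(b_A(u)-\sum_i\varepsilon_i\one_{B_i}(u))\) modulo \(2q\), and the bracket is constant modulo \(2\) on \(U\). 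So at most \(k=|U|-1\) tuples are used, hence at most \(q(|U|-1)\) vertices.

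The self-layer statement is exactly Corollary~\ref{cor:self-layer}, applied to the retained set \(W=A\setminus E\), where \(E\) is the union of the deleted tuples. The argument of Theorem~\ref{thm:core-correction} does not require all of \(A\setminus U\) to be a twin tail; it only uses that each deleted block has \(q\) vertices sharing a trace on \(U\). For the algorithmic claim, I form the \(\F_2\) matrix whose columns are representatives of the classes \([\one_B]\) with \(n_B\ge q\), working modulo \(\one_U\), for instance by restricting to coordinates \(U\setminus\{u_0\}\) after normalizing each vector to vanish at \(u_0\). Gaussian elimination then selects a pivot basis and solves for \(\varepsilon\), as in Corollary~\ref{cor:algorithmic-verification}.

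The only step requiring care is the bounded-count claim. I must make sure the solution uses each trace at most once, which the basis extraction guarantees, rather than some arbitrary spanning combination that could repeat traces or use more than \(|U|-1\) of them. I also need the quotient dimension to be exactly \(|U|-1\), which holds because \(\one_U\ne0\) when \(U\) is nonempty.
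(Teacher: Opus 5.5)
Your proposal is correct and follows essentially the same route as the paper's proof. You take a spanning (basis) subfamily of at most \(|U|-1\) available trace classes, delete one \(q\)-tuple per trace with coefficient \(1\) (these are disjoint because the traces are distinct), then invoke Theorem~\ref{thm:trace-multiplicity}(1), Corollary~\ref{cor:self-layer}, and row reduction. Your remark that the computation in Theorem~\ref{thm:core-correction} needs only the deleted blocks, not the whole tail, to be equal-trace \(q\)-tuples justifies the self-layer step more carefully than the paper's direct citation of Corollary~\ref{cor:self-layer}, which is stated under a twin-tail hypothesis.
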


\begin{proof}
Let
\[
        V_U=\F_2^U/\langle \one_U\rangle .
\]
The space \(V_U\) has dimension at most \(|U|-1\).  By hypothesis, the classes \([\one_B]\) with \(n_B\ge q\) span \(V_U\).  Choose from them a spanning subset of size at most \(|U|-1\).  Since this subset spans \(V_U\), it represents the target class \([b_A|_U]\) as a binary linear combination of at most \(|U|-1\) available trace classes.

For each trace \(B\) used with coefficient \(1\), delete any \(q\) vertices of trace \(B\).  These \(q\)-tuples are disjoint because at most one \(q\)-tuple is used from each chosen trace and \(n_B\ge q\).  Theorem~\ref{thm:trace-multiplicity}\textup{(1)} shows that the deletion synchronizes the degrees of the core vertices modulo \(2q\).  The self-layer assertion is exactly Corollary~\ref{cor:self-layer}.  The linear combination can be found by row reduction over \(\F_2\).
\end{proof}

\begin{theorem}[Connected pair-trace reservoir absorption]\label{thm:pair-trace-connected}
Let \(q\) be a power of two, let \(A\) be a \(q\)-modular witness, and let \(U\subseteq A\) have size \(m\ge 2\).  For each trace \(B\subseteq U\), let \(n_B\) be the number of vertices in \(A\setminus U\) with trace \(B\) on \(U\).  Define the \(q\)-heavy pair-trace graph \(H_2\) on vertex set \(U\) by declaring \(xy\) to be an edge when
\[
        n_{\{x,y\}}\ge q.
\]
Suppose \(H_2\) is connected.  If \(m\) is even, suppose in addition that there is an odd-cardinality trace \(C\subseteq U\) with \(n_C\ge q\).  Then the available trace classes \([\one_B]\), \(n_B\ge q\), span \(\F_2^U/\langle\one_U\rangle\).  Consequently every top-bit defect on \(U\) can be synchronized modulo \(2q\) by deleting at most \(q(m-1)\) tail vertices.

If, after this deletion, every retained vertex outside \(U\) has the common residue already obtained on \(U\), then the whole retained set is \(2q\)-modular.
\end{theorem}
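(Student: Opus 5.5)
The plan is to prove the spanning statement by pure linear algebra over \(\F_2\), and then read off the absorption consequence from Theorem~\ref{thm:rank-rich}. Let \(S\subseteq\F_2^U\) be the span of the available indicator vectors \(\one_B\) with \(n_B\ge q\). It suffices to show that \(S+\langle\one_U\rangle=\F_2^U\). Then the image of \(S\) in \(\F_2^U/\langle\one_U\rangle\) is everything, and Theorem~\ref{thm:rank-rich} gives the deletion of at most \(|U|-1=m-1\) equal-trace \(q\)-tuples, hence at most \(q(m-1)\) tail vertices. It also gives the self-layer statement and the Gaussian-elimination remark.

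The first step is to identify the span of the pair traces. For each edge \(xy\) of \(H_2\) the vector \(\one_{\{x,y\}}=e_x+e_y\) lies in \(S\). Since \(H_2\) is connected on \(U\), summing these vectors along a path from \(x\) to \(y\) in \(H_2\) shows that \(e_x+e_y\in S\) for every pair \(x,y\in U\). The vectors \(e_x+e_y\) span the even-weight subspace \(\mathcal E=\{v\in\F_2^U:\sum_u v_u=0\}\), which has dimension \(m-1\). Hence \(\mathcal E\subseteq S\). This is the usual fact that the cut space, or edge incidence span, of a connected graph is the full even-weight space.

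The second step is to pass from \(\mathcal E\) to the whole quotient, splitting by the parity of \(m\). If \(m\) is odd, then \(\one_U\) has odd weight and so \(\one_U\notin\mathcal E\). Since \(\mathcal E\) has codimension \(1\), we get \(\mathcal E+\langle\one_U\rangle=\F_2^U\), and no extra hypothesis is needed. If \(m\) is even, then \(\one_U\in\mathcal E\), so the image of \(\mathcal E\) in the quotient has dimension only \(m-2\). In that case the hypothesised odd trace \(C\) with \(n_C\ge q\) supplies \(\one_C\in S\setminus\mathcal E\). Then \(S\supseteq\mathcal E+\langle\one_C\rangle=\F_2^U\), and the image is again everything.

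The only point needing care is this parity dichotomy: understanding why \(\one_U\) lies in the even-weight space exactly when \(m\) is even, which is what forces the extra odd trace. The bound on deletions comes directly from \(\dim\F_2^U/\langle\one_U\rangle=m-1\) via Theorem~\ref{thm:rank-rich}, so no further counting is needed.
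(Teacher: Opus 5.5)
Your proof is correct and follows the paper's argument essentially step for step. Path sums in the connected graph \(H_2\) give the even-weight subspace, and the parity of \(m\) decides whether \(\one_U\) lies in it; when \(m\) is even, the odd trace \(C\) supplies the missing direction, and Theorem~\ref{thm:rank-rich} then yields the deletion bound and the self-layer statement. The only cosmetic difference is that you show \(S+\langle\one_U\rangle=\F_2^U\) directly (and, in the even case, \(S=\F_2^U\)), whereas the paper argues through injectivity and dimension counts for the quotient map restricted to the even-weight space.
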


\begin{proof}
Let
\[
        E=\left\{x\in\F_2^U:\sum_{u\in U}x(u)=0\right\}
\]
be the even-weight subspace.  We first show that the incidence vectors \(\one_{\{x,y\}}\) of the edges of a connected graph on \(U\) span \(E\).  Fix \(u_0\in U\).  For any \(u\ne u_0\), summing the edge-incidence vectors along a path from \(u_0\) to \(u\) gives \(\one_{\{u_0,u\}}\), since internal vertices on the path occur twice.  The vectors \(\one_{\{u_0,u\}}\), \(u\ne u_0\), span all even-weight vectors, so the \(q\)-heavy pair traces span \(E\).

Let \(\pi:\F_2^U\to \F_2^U/\langle\one_U\rangle\) be the quotient map.  If \(m\) is odd, then \(\one_U\notin E\), so \(\pi\) is injective on \(E\).  Since \(\dim E=m-1=\dim \F_2^U/\langle\one_U\rangle\), the images of the pair-trace classes span the quotient.

If \(m\) is even, then \(\one_U\in E\), so \(\pi(E)\) has dimension \(m-2\).  Let \(C\) be an available trace of odd cardinality.  The vector \(\one_C\) is not in \(E+\langle\one_U\rangle=E\), because both \(E\) and \(\one_U\) have even weight.  Hence \(\pi(\one_C)\notin \pi(E)\), and the pair traces together with \(\one_C\) span the full quotient.

The final assertions now follow from Theorem~\ref{thm:rank-rich}.
\end{proof}

\begin{corollary}[Complete pair-trace reservoir]\label{cor:complete-pair-trace}
Let \(q\) be a power of two, let \(A\) be a \(q\)-modular witness, and let \(U\subseteq A\) have size \(m\ge 2\).  Suppose that
\[
        n_{\{x,y\}}\ge q
\]
for every two-element subset \(\{x,y\}\subseteq U\).  If \(m\) is odd, then every top-bit defect on \(U\) can be synchronized modulo \(2q\) by deleting at most \(q(m-1)\) tail vertices.  If \(m\) is even, the same conclusion holds provided there is at least one odd-cardinality trace \(C\subseteq U\) with \(n_C\ge q\).
\end{corollary}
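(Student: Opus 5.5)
This corollary is a direct specialization of Theorem~\ref{thm:pair-trace-connected}. The only work is to check that its hypotheses hold, and then read off the conclusion.

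\emph{The pair-trace graph is complete.} The assumption \(n_{\{x,y\}}\ge q\) for every two-element subset \(\{x,y\}\subseteq U\) says exactly that every pair \(xy\) is an edge of the \(q\)-heavy pair-trace graph \(H_2\). So \(H_2\) is the complete graph \(K_m\) on \(U\). Since \(m\ge 2\), this graph is connected, which is the main hypothesis of Theorem~\ref{thm:pair-trace-connected}.

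\emph{Splitting by parity of \(m\).}
\begin{itemize}[leftmargin=*]
    \item If \(m\) is odd, Theorem~\ref{thm:pair-trace-connected} needs nothing further. It shows that the available classes \([\one_B]\) with \(n_B\ge q\) span \(\F_2^U/\langle\one_U\rangle\).
    \item If \(m\) is even, the extra hypothesis of the corollary supplies an odd-cardinality trace \(C\) with \(n_C\ge q\). This is precisely the additional hypothesis that Theorem~\ref{thm:pair-trace-connected} requires in the even case.
\end{itemize}

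\emph{Conclusion.} In both cases Theorem~\ref{thm:pair-trace-connected}, via Theorem~\ref{thm:rank-rich}, gives synchronization of the top-bit defect on \(U\) modulo \(2q\). The deletion uses at most \(m-1\) disjoint equal-trace \(q\)-tuples, hence at most \(q(m-1)\) tail vertices.

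I expect no real obstacle here. The only point to watch is that connectivity of \(K_m\) genuinely requires \(m\ge 2\), and this is included in the hypotheses.

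If one wanted an explicit construction rather than an appeal to connectivity, there is a direct route. Fix \(u_0\in U\). The star vectors \(\one_{\{u_0,u\}}\) are themselves available traces, and they span the even-weight subspace. For odd \(m\), choose the subset of \(U\) representing \([b_A|_U]\) that has even size; this is possible because complementing flips parity. Then write that subset as a sum of at most \(m-1\) star vectors. For even \(m\), add \(\one_C\) first if the representative has odd size.
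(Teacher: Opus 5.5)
Your proposal is correct and is exactly the paper's argument: the \(q\)-heavy pair-trace graph is \(K_m\), hence connected for \(m\ge 2\), so Theorem~\ref{thm:pair-trace-connected} applies directly, with the odd trace \(C\) supplying the extra hypothesis when \(m\) is even. One small point about your optional explicit construction: for even \(m\), once you have added \(\one_C\), take whichever of the even representative and its (also even) complement avoids \(u_0\); that needs at most \(m-2\) star vectors, so the total stays at most \(m-1\) tuples.
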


\begin{proof}
The \(q\)-heavy pair-trace graph is the complete graph on \(U\), hence is connected.  The result is Theorem~\ref{thm:pair-trace-connected}.
\end{proof}

\begin{example}[A connected pair-trace absorption certificate]\label{ex:pair-trace-path}\mbox{}\par\nobreak
Let \(U=\{1,2,3,4,5\}\).  Suppose that the \(q\)-heavy pair-trace graph \(H_2\) contains the path
\[
        1\!-\!2\!-\!3\!-\!4\!-\!5.
\]
Thus the tail contains at least \(q\) vertices of each of the traces
\[
        \{1,2\},\quad \{2,3\},\quad \{3,4\},\quad \{4,5\}.
\]
Since \(|U|=5\) is odd, Theorem~\ref{thm:pair-trace-connected} says that these pair traces already span \(\F_2^U/\langle\one_U\rangle\).  For example, if the top-bit defect on \(U\) is
\[
        b_A=(1,0,1,0,0),
\]
then
\[
        [b_A]=[\one_{\{1,2\}}]+[\one_{\{2,3\}}].
\]
Deleting any \(q\) tail vertices with trace \(\{1,2\}\) and any \(q\) tail vertices with trace \(\{2,3\}\) therefore synchronizes the core degrees modulo \(2q\).  The example illustrates that the theorem is a graph certificate: connectivity of the heavy pair-trace graph supplies the required quotient basis.
\end{example}

The next theorem is a trace-reservoir model result.  Its independence assumption is an explicit hypothesis on the reservoir and is not asserted for arbitrary graphs or arbitrary \(q\)-modular witnesses.  In particular, the uniform bound in Corollary~\ref{cor:uniform-random} is exponential in \(|U|\), so it should be viewed as a calibration and certification statement rather than as a polynomial-loss lifting theorem.

\begin{theorem}[Random trace-reservoir absorption]\label{thm:random-reservoir}
Fix a finite set \(U\) with \(|U|=m\ge 1\), a power of two \(q\), and a probability distribution \(\mu\) on the subsets of \(U\).  Suppose there are traces \(B_1,\ldots,B_{m-1}\subseteq U\) whose classes form a basis of \(\F_2^U/\langle \one_U\rangle\), and suppose \(\mu(B_i)\ge p>0\) for all \(i\).

Let \(A\) be a \(q\)-modular witness with core \(U\), and suppose the traces of the \(N\) vertices in \(A\setminus U\) against \(U\) are independent samples from \(\mu\).  If \(Np\ge 2q\), then with probability at least
\[
        1-(m-1)\exp(-Np/8),
\]
every top-bit defect on \(U\) can be synchronized modulo \(2q\) by deleting at most \(q(m-1)\) tail vertices.  If the self-layer condition of Corollary~\ref{cor:self-layer} holds after the deletion, the resulting retained set is \(2q\)-modular.
\end{theorem}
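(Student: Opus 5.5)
The plan is to reduce the statement to Theorem~\ref{thm:rank-rich}.  That theorem needs only that the classes \([\one_B]\) with \(n_B\ge q\) span \(\F_2^U/\langle\one_U\rangle\).  The given basis traces \(B_1,\ldots,B_{m-1}\) already span this quotient.  So it suffices to show that, with the stated probability, every \(B_i\) satisfies \(n_{B_i}\ge q\).  On that event, Theorem~\ref{thm:rank-rich} yields a deletion of at most \(m-1\) disjoint equal-trace \(q\)-tuples, hence at most \(q(m-1)\) tail vertices, synchronizing \(U\) modulo \(2q\).  The self-layer clause is then exactly Corollary~\ref{cor:self-layer}.  The case \(m=1\) is trivial: the quotient is zero, the probability bound is \(1\), and no deletion is needed.

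\textbf{Step 1: the count \(n_{B_i}\).}  Fix \(i\).  By independence, \(n_{B_i}\) is a sum of \(N\) independent Bernoulli indicators, each with success probability \(\mu(B_i)\ge p\).  By monotone coupling we may assume the success probability is exactly \(p\) when bounding the lower tail.  Then \(\mathbb{E}\,n_{B_i}\ge Np=:\lambda\), and the hypothesis \(Np\ge 2q\) gives \(q\le \lambda/2\).

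\textbf{Step 2: a Chernoff lower-tail bound.}  The multiplicative Chernoff bound \(\Pr[X\le(1-\delta)\lambda]\le\exp(-\delta^2\lambda/2)\) with \(\delta=1/2\) gives
\[
        \Pr[n_{B_i}<q]\le\Pr[n_{B_i}\le \lambda/2]\le \exp(-\lambda/8)=\exp(-Np/8).
\]
For a binomial variable whose mean exceeds \(\lambda\), the bound follows from stochastic domination by the mean-\(\lambda\) case.

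\textbf{Step 3: union bound.}  A union bound over the \(m-1\) basis traces shows that, with probability at least \(1-(m-1)\exp(-Np/8)\), all of them are \(q\)-heavy.  On this event Theorem~\ref{thm:rank-rich} applies, which completes the argument.

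The only delicate point is stating the Chernoff inequality in a form valid when the true mean exceeds \(Np\).  This is handled by the coupling or stochastic-domination remark in Step 2.  The rest is direct invocation of earlier results.
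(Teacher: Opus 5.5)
Your proposal is correct and follows essentially the same route as the paper. Both arguments use a Chernoff lower-tail bound with $\delta=1/2$ to show each basis trace $B_i$ is $q$-heavy except with probability $\exp(-Np/8)$, take a union bound over the $m-1$ basis traces, and then invoke Theorem~\ref{thm:rank-rich}, with the self-layer clause coming from Corollary~\ref{cor:self-layer}; your stochastic-domination remark for $\mu(B_i)>p$ and your treatment of the trivial case $m=1$ are small clarifications that the paper leaves implicit.
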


\begin{proof}
For each \(i\), let \(X_i\) be the number of sampled tail vertices whose trace is \(B_i\).  Then \(X_i\) is binomial with mean at least \(Np\).  Since \(Np\ge 2q\), the Chernoff bound gives
\[
        \Pr(X_i<q)\le \Pr(X_i<Np/2)\le \exp(-Np/8).
\]
By the union bound, with probability at least \(1-(m-1)\exp(-Np/8)\), every basis trace \(B_i\) appears at least \(q\) times in the tail.  On this event the available trace classes span \(\F_2^U/\langle \one_U\rangle\), so Theorem~\ref{thm:rank-rich} applies.
\end{proof}

\begin{corollary}[Uniform random trace reservoir]\label{cor:uniform-random}
Let \(A\) be a \(q\)-modular witness with core \(U\), where \(|U|=m\ge 2\), and suppose the \(N\) traces from \(A\setminus U\) to \(U\) are independent and uniformly distributed over all subsets of \(U\).  If \(0<\delta<1\) and
\[
        N\ge \max\left\{2^{m+1}q,\;8\cdot 2^m\log\left(\frac{m-1}{\delta}\right)\right\},
\]
then with probability at least \(1-\delta\), every top-bit defect on \(U\) can be synchronized modulo \(2q\) by deleting at most \(q(m-1)\) tail vertices.  If the self-layer condition of Corollary~\ref{cor:self-layer} holds after the deletion, the resulting retained set is \(2q\)-modular.
\end{corollary}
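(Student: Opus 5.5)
The corollary follows from Theorem~\ref{thm:random-reservoir} once we choose an explicit basis of traces and an explicit value of \(p\). Under the uniform distribution \(\mu\) on subsets of \(U\), every single trace has probability \(p=2^{-m}\).

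\textbf{Basis.} We need \(m-1\) traces whose classes form a basis of \(\F_2^U/\langle\one_U\rangle\). Fix \(u_0\in U\) and take the singletons \(\{u\}\) for \(u\ne u_0\). Their indicator vectors together with \(\one_U\) form a basis of \(\F_2^U\). Indeed, adding all of them to \(\one_U\) gives \(\one_{\{u_0\}}\), so together they span \(\F_2^U\), and there are exactly \(m\) of them. Hence the classes \([\one_{\{u\}}]\), \(u\ne u_0\), form a basis of the quotient. Alternatively, one could take the pair traces along a star, as in Theorem~\ref{thm:pair-trace-connected}; but the singletons work uniformly in \(m\) and need no parity case split.

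\textbf{Parameter check.} The hypothesis \(N\ge 2^{m+1}q\) gives \(Np=N2^{-m}\ge 2q\), which is the condition required in Theorem~\ref{thm:random-reservoir}. The hypothesis \(N\ge 8\cdot2^m\log((m-1)/\delta)\) gives \(Np/8\ge\log((m-1)/\delta)\), so
\[
(m-1)\exp(-Np/8)\le (m-1)\cdot\frac{\delta}{m-1}=\delta .
\]
Here \(m\ge 2\) ensures \(m-1\ge1\), so the logarithm is well defined. If \((m-1)/\delta<1\), the logarithm is negative and the second bound on \(N\) is vacuous, but the failure probability is still at most \((m-1)e^{-Np/8}\le(m-1)<\ldots\); in this case the inequality \(\log((m-1)/\delta)\le Np/8\) still holds trivially, so the same chain applies.

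\textbf{Conclusion.} Theorem~\ref{thm:random-reservoir} then gives the synchronization by deleting at most \(q(m-1)\) tail vertices with probability at least \(1-\delta\). The self-layer statement transfers verbatim from that theorem.

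There is no real obstacle. The only care needed is confirming that the chosen traces form a quotient basis and that the logarithm is the natural one, which matches the \(\exp\) in the Chernoff bound.
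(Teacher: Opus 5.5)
Your proposal is correct and matches the paper's proof: you take the singleton traces \(\{u\}\), \(u\ne u_0\), as the quotient basis with \(p=2^{-m}\), check \(Np\ge 2q\) and \((m-1)\exp(-Np/8)\le\delta\), and then invoke Theorem~\ref{thm:random-reservoir}. Your aside on the case \((m-1)/\delta<1\) is unnecessary, and it trails off mid-inequality; that case cannot occur, because \(\delta<1\le m-1\) forces \((m-1)/\delta>1\).
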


\begin{proof}
Choose a base vertex \(u_0\in U\) and use the singleton traces \(\{u\}\), \(u\in U\setminus\{u_0\}\), as a basis of \(\F_2^U/\langle\one_U\rangle\).  Under the uniform trace distribution, each such trace has probability \(p=2^{-m}\).  The displayed lower bound on \(N\) gives \(Np\ge 2q\) and
\[
        (m-1)\exp(-Np/8)\le \delta.
\]
The result follows from Theorem~\ref{thm:random-reservoir}.
\end{proof}

\begin{corollary}[Polynomial-time verification for a fixed core]\label{cor:polytime-fixed-core}
For a fixed core \(U\), the existence of a \(q\)-fold equal-trace block correction on \(U\) is decidable in polynomial time.  Compute the trace multiplicities \(n_B\), build the matrix whose columns are \([\one_B]\) for traces with \(n_B\ge q\), and solve the linear system
\[
        M\varepsilon=[b_A|_U]
\]
over \(\F_2\).

If the system is solvable, the solution identifies which trace \(q\)-tuples to delete in order to synchronize the core.  If the intended retained set is exactly \(U\), one checks instead the all-tail identity in Theorem~\ref{thm:trace-multiplicity}\textup{(2)}.
\end{corollary}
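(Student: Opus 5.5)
The plan is to reduce the statement to linear algebra over \(\F_2\) through Theorem~\ref{thm:trace-multiplicity}\textup{(1)}. I would carry out the following steps in order.

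\emph{Step 1: compute the data.} Scan the vertices \(x\in A\setminus U\) and record the trace \(N(x)\cap U\). Each trace is computed in \(O(|U|)\) time from adjacency data. Bucketing the traces by their characteristic vectors produces the multiplicities \(n_B\). Only traces that actually occur are stored, so there are at most \(|A\setminus U|\) of them, even though there are \(2^{|U|}\) possible subsets. This point matters for polynomiality and must be stated explicitly.

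\emph{Step 2: compute the target.} Fix the lift \(d\) and compute \(\deg_A(v)\) for each \(v\in U\). Read off \(b_A(v)\) from \(\deg_A(v)\equiv d+qb_A(v)\pmod{2q}\). Then \(b_A|_U\) is a vector in \(\F_2^U\).

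\emph{Step 3: pass to the quotient.} Identify \(\F_2^U/\langle\one_U\rangle\) with \(\F_2^{U\setminus\{u_0\}}\) for a fixed \(u_0\in U\), by the map \(x\mapsto (x(u)+x(u_0))_{u\ne u_0}\). This map is linear, surjective, and has kernel exactly \(\langle\one_U\rangle\). Under it, \([\one_B]\) and \([b_A|_U]\) become explicit vectors of length \(|U|-1\). Let \(M\) be the matrix whose columns are the images of \([\one_B]\) for those \(B\) with \(n_B\ge q\).

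\emph{Step 4: decide and recover the deletion.} By Theorem~\ref{thm:trace-multiplicity}\textup{(1)}, a correction exists if and only if \(M\varepsilon=[b_A|_U]\) is solvable. Gaussian elimination over \(\F_2\) decides this in time polynomial in \(|U|\) and the number of columns, and the number of columns is at most \(|A|\). If a solution \(\varepsilon\) is found, then for each \(B\) with \(\varepsilon_B=1\) we delete \(q\) arbitrary vertices of trace \(B\). These \(q\)-tuples are disjoint because their traces are distinct. The proof of Theorem~\ref{thm:rank-rich} shows that this deletion synchronizes the core modulo \(2q\).

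\emph{Step 5: the exact-\(U\) variant.} First check that \(q\mid n_B\) for every occurring \(B\). Then evaluate the single vector \(\sum_B (n_B/q \bmod 2)[\one_B]\) and compare it with \([b_A|_U]\) in the chosen coordinates. By Theorem~\ref{thm:trace-multiplicity}\textup{(2)}, this is a direct polynomial-time test.

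I expect no real obstacle in this argument. The one point that needs care is the complexity bookkeeping in Step 1: the matrix must be indexed by the occurring heavy traces rather than by all \(2^{|U|}\) subsets, and the quotient coordinates of Step 3 must be verified as a faithful encoding so that Gaussian elimination is legitimate.
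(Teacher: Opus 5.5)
Your proposal is correct and follows the paper's route. The paper's proof simply observes that the corollary is Theorem~\ref{thm:trace-multiplicity} written as a linear system over \(\F_2\) and solved by Gaussian elimination. Your extra bookkeeping (columns indexed only by the occurring heavy traces, and the explicit quotient coordinates as in Section~\ref{sec:algorithmic-certificate}) fills in details the paper leaves implicit without changing the argument.
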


\begin{proof}
The statement is exactly Theorem~\ref{thm:trace-multiplicity} written as a finite linear system over \(\F_2\).  Gaussian elimination gives a polynomial-time decision procedure once \(U\) is fixed and the traces against \(U\) have been computed.
\end{proof}

\begin{corollary}[Dual certificate for failure of absorption]\label{cor:dual-certificate}
Under the hypotheses of Theorem~\ref{thm:trace-multiplicity}, the core \(U\) cannot be synchronized modulo \(2q\) by deleting equal-trace \(q\)-tuples from \(A\setminus U\) if and only if there exists a linear functional
\[
        \lambda:\F_2^U/\langle \one_U\rangle\to\F_2
\]
such that
\[
        \lambda([b_A|_U])=1
\]
and
\[
        \lambda([\one_B])=0
\]
for every trace \(B\) with \(n_B\ge q\).
\end{corollary}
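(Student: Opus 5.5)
The plan is to combine Theorem~\ref{thm:trace-multiplicity}\textup{(1)} with the standard duality between a subspace of a finite-dimensional vector space and its annihilator. Put \(V_U=\F_2^U/\langle\one_U\rangle\) and let \(S=\Span\{[\one_B]:n_B\ge q\}\subseteq V_U\). By Theorem~\ref{thm:trace-multiplicity}\textup{(1)}, synchronization of \(U\) by deleting equal-trace \(q\)-tuples fails exactly when \([b_A|_U]\notin S\). So the corollary reduces to a purely linear-algebraic claim over \(\F_2\): a vector \(w\) of the finite-dimensional space \(V_U\) lies outside a subspace \(S\) if and only if some linear functional \(\lambda\) on \(V_U\) vanishes on \(S\) and satisfies \(\lambda(w)=1\).

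For the direction ``functional implies failure'', suppose such a \(\lambda\) exists and that nonetheless \([b_A|_U]=\sum\varepsilon_B[\one_B]\) over available traces. Applying \(\lambda\) gives \(1=\sum\varepsilon_B\lambda([\one_B])=0\), a contradiction. Since \(\lambda\) kills every generator of \(S\), it kills all of \(S\), so no such representation exists.

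For the converse, assume \(w=[b_A|_U]\notin S\). I would choose a basis \(s_1,\dots,s_k\) of \(S\). Because \(w\notin S\), the family \(s_1,\dots,s_k,w\) is linearly independent, and I would extend it to a basis of \(V_U\). Then I would define \(\lambda\) on this basis by sending each \(s_i\) to \(0\), \(w\) to \(1\), and every remaining basis vector to \(0\), and extend linearly. This \(\lambda\) vanishes on \(S\), and in particular on each available class \([\one_B]\), and it satisfies \(\lambda(w)=1\).

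No step is a real obstacle; the only point needing care is that the functional must be defined on the quotient \(V_U\) rather than on \(\F_2^U\). If one wants to phrase \(\lambda\) as the parity cut promised later (Theorem~\ref{thm:parity-cut}), the relevant observation is that functionals on \(V_U\) are exactly the functionals \(x\mapsto\sum_{u\in Y}x(u)\) on \(\F_2^U\) that vanish on \(\one_U\). These are precisely those with \(|Y|\) even. I would remark on this identification but not need it for the corollary itself.
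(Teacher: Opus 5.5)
Your proposal is correct and matches the paper's proof: you use Theorem~\ref{thm:trace-multiplicity}\textup{(1)} to reduce failure of absorption to \([b_A|_U]\notin\Span\{[\one_B]:n_B\ge q\}\), and then separate that vector from the subspace by a linear functional on \(\F_2^U/\langle\one_U\rangle\). The only difference is that you prove both directions of the separation fact explicitly (the contradiction argument, and the basis-extension construction of \(\lambda\)), whereas the paper simply cites it as a standard fact about finite-dimensional vector spaces.
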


\begin{proof}
By Theorem~\ref{thm:trace-multiplicity}, absorption is possible exactly when \([b_A|_U]\) lies in the span of the available trace classes \([\one_B]\) with \(n_B\ge q\).  Over a finite-dimensional vector space, a vector lies outside a subspace exactly when there is a linear functional that vanishes on the subspace and is nonzero on the vector.  This gives the stated \(\lambda\).
\end{proof}

This dual certificate is the clean obstruction corresponding to failure of twin-block absorption.  It is finite, checkable, and local to the trace data of \(A\setminus U\) against \(U\).  In particular, if some top-bit defect on \(U\) cannot be corrected by deleting equal-trace \(q\)-tuples, then the available trace classes have rank at most \(|U|-2\).

\begin{theorem}[Parity-cut form of the dual obstruction]\label{thm:parity-cut}
Under the hypotheses of Theorem~\ref{thm:trace-multiplicity}, the core \(U\) cannot be synchronized modulo \(2q\) by deleting equal-trace \(q\)-tuples from \(A\setminus U\) if and only if there exists a nonempty subset \(Y\subseteq U\) such that
\[
        |Y|\equiv 0\pmod 2,\qquad
        \sum_{u\in Y} b_A(u)\equiv 1\pmod 2,
\]
and
\[
        |B\cap Y|\equiv 0\pmod 2
\]
for every trace \(B\subseteq U\) with \(n_B\ge q\).

Equivalently, exactly one of the following alternatives holds for the fixed core \(U\):
\begin{enumerate}[label=\textup{(\arabic*)}, leftmargin=*]
    \item the top-bit defect \([b_A|_U]\) is absorbed by deleting equal-trace \(q\)-tuples;
    \item there is an even parity cut \(Y\) that is invisible to every available trace class but detects \(b_A|_U\).
\end{enumerate}
\end{theorem}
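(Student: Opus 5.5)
The plan is to translate the dual certificate of Corollary~\ref{cor:dual-certificate} into concrete subset language.  The tool is the standard identification of the dual of \(\F_2^U/\langle\one_U\rangle\) with the annihilator of \(\one_U\) in \((\F_2^U)^*\).  Using the dot product on \(\F_2^U\), every linear functional on \(\F_2^U\) has the form \(x\mapsto y\cdot x\) for a unique \(y\in\F_2^U\).  Such a functional descends to the quotient exactly when it kills \(\one_U\), that is, when \(y\cdot\one_U=0\).  Writing \(y=\one_Y\), this says \(|Y|\) is even.  So linear functionals \(\lambda\) on \(\F_2^U/\langle\one_U\rangle\) correspond bijectively to even subsets \(Y\subseteq U\) via \(\lambda([x])=\sum_{u\in Y}x(u)\).

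Under this dictionary, the three conditions of Corollary~\ref{cor:dual-certificate} become the three conditions of the theorem:
\begin{itemize}
  \item \(\lambda([b_A|_U])=1\) becomes \(\sum_{u\in Y}b_A(u)\equiv1\pmod 2\);
  \item \(\lambda([\one_B])=0\) becomes \(|B\cap Y|\equiv0\pmod 2\) for each available trace \(B\);
  \item well-definedness of \(\lambda\) on the quotient is exactly \(|Y|\equiv 0\pmod 2\).
\end{itemize}
Nonemptiness of \(Y\) is automatic, since \(\sum_{u\in Y}b_A(u)=1\) forces \(Y\ne\varnothing\).  I would also note that the condition \(\sum_{u\in Y}b_A(u)\equiv 1\) does not depend on the choice of lift \(d\).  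Changing \(d\) alters \(b_A\) by a constant \(c\), which changes the sum by \(c|Y|\equiv0\), because \(|Y|\) is even.

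Combining this with Corollary~\ref{cor:dual-certificate} gives the first equivalence: failure of absorption holds if and only if such a \(Y\) exists.  The "exactly one" formulation then follows.  Alternative~(1) is absorption, which by Theorem~\ref{thm:trace-multiplicity}\textup{(1)} is the span condition.  Alternative~(2) is equivalent to the negation of~(1) by what was just proved.  So exactly one of the two holds.  For a direct check that they cannot hold simultaneously, suppose \([b_A|_U]=\sum\varepsilon_B[\one_B]\) and pair both sides with \(\one_Y\).  The left side gives \(1\) and the right side gives \(0\), a contradiction.

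The only step needing care is the identification of the quotient dual with even subsets, in particular checking that the pairing is well defined on classes.  This is routine linear algebra, so I expect no genuine obstacle.
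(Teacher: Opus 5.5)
Your proposal is correct and follows the paper's proof: you identify linear functionals on \(\F_2^U/\langle\one_U\rangle\) with even subsets \(Y\subseteq U\) via \([x]\mapsto \one_Y\cdot x\), and then read the parity-cut conditions directly off Corollary~\ref{cor:dual-certificate}. Your extra remarks are sound additions that the paper leaves implicit: nonemptiness of \(Y\) is automatic, the condition is independent of the lift \(d\), and pairing with \(\one_Y\) shows directly that the two alternatives cannot both hold.
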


\begin{proof}
The dual space of \(\F_2^U/\langle\one_U\rangle\) is naturally identified with the vectors \(y\in\F_2^U\) satisfying \(y\cdot\one_U=0\), because exactly these vectors define functionals
\[
        [x]\longmapsto y\cdot x
\]
that vanish on constant vectors.  Such a vector \(y\) is the indicator of an even subset \(Y\subseteq U\).  Under this identification,
\[
        y\cdot b_A=\sum_{u\in Y}b_A(u)\pmod 2
        \quad\text{and}\quad
        y\cdot\one_B=|B\cap Y|\pmod 2.
\]
Corollary~\ref{cor:dual-certificate} is therefore exactly the stated parity-cut condition.
\end{proof}

\begin{theorem}[Certificate theorem for one dyadic absorption step]\label{thm:certificate}
Fix a power of two \(q\), a \(q\)-modular witness \(A\), and a core \(U\subseteq A\).  Let \(R=A\setminus U\), and let \(n_B\) count the vertices of \(R\) with trace \(B\) on \(U\).  Choose the top-bit label \(b_A\) on \(U\) as in Lemma~\ref{lem:affine-lift}.  Then the following data are equivalent to a successful certified absorption of the core \(U\).
\begin{enumerate}[label=\textup{(\arabic*)}, leftmargin=*]
    \item A binary vector \(\varepsilon_B\), indexed by traces \(B\) with \(n_B\ge q\), such that
    \[
        \sum_B \varepsilon_B[\one_B]=[b_A|_U]
        \quad\text{in }\F_2^U/\langle \one_U\rangle.
    \]
    \item A choice of equal-trace \(q\)-tuples in \(R\) whose deletion makes the vertices of \(U\) have congruent degrees modulo \(2q\).
\end{enumerate}
Moreover, failure of such a certificate is equivalent to a dual certificate: a linear functional \(\lambda\) on \(\F_2^U/\langle \one_U\rangle\) such that \(\lambda([b_A|_U])=1\) and \(\lambda([\one_B])=0\) for every trace \(B\) with \(n_B\ge q\).

If the intended retained set is exactly \(U\), then the certificate is not an arbitrary solution vector.  It is the all-tail identity
\[
        [b_A|_U]=\sum_B\left(\frac{n_B}{q}\pmod 2\right)[\one_B],
\]
with the additional requirement that each \(n_B\) be divisible by \(q\).
\end{theorem}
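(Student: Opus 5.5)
This certificate theorem is essentially a repackaging of Theorem~\ref{thm:trace-multiplicity}, Theorem~\ref{thm:core-correction} and Corollary~\ref{cor:dual-certificate}. I would prove it by exhibiting explicit translations between the two kinds of data, rather than by a new argument.

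\emph{From (1) to (2).} Given a binary vector \(\varepsilon_B\) with \(\sum_B\varepsilon_B[\one_B]=[b_A|_U]\), for each \(B\) with \(\varepsilon_B=1\) I choose any \(q\) vertices of \(R\) with trace \(B\). This is possible since \(n_B\ge q\). The chosen tuples are disjoint, because distinct traces give disjoint vertex classes. Deleting them changes \(\deg(u)\) for \(u\in U\) by \(q\sum_B\varepsilon_B\one_B(u)\). The computation in the proof of Theorem~\ref{thm:core-correction} then gives
\[
\deg_{W}(u)\equiv d+q\Bigl(b_A(u)-\sum_B\varepsilon_B\one_B(u)\Bigr)\pmod{2q},
\]
and the bracket is constant mod \(2\) by hypothesis.

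\emph{From (2) to (1).} Given any collection of disjoint equal-trace \(q\)-tuples whose deletion synchronizes \(U\), let \(k_B\) be the number of deleted tuples of trace \(B\). Whenever \(k_B>0\) we have \(n_B\ge q\). Put \(\varepsilon_B=k_B\bmod 2\). The same congruence, now with coefficients \(k_B\), reduces mod \(2q\) to one depending only on \(\varepsilon_B\). Synchronization is therefore equivalent to the quotient identity. This is precisely part (1) of Theorem~\ref{thm:trace-multiplicity}, which I would simply cite.

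\emph{Failure.} No vector \(\varepsilon\) exists exactly when \([b_A|_U]\) lies outside the span of the available classes. Finite-dimensional duality over \(\F_2\), as in Corollary~\ref{cor:dual-certificate}, then supplies \(\lambda\). The two alternatives are mutually exclusive: applying \(\lambda\) to a representation \(\sum\varepsilon_B[\one_B]=[b_A|_U]\) would give \(0=1\).

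\emph{Retained set exactly \(U\).} Retaining exactly \(U\) means deleting all of \(R\) as a union of equal-trace \(q\)-tuples. This forces \(q\mid n_B\) for every \(B\), and forces the tuple count of trace \(B\) to be \(n_B/q\). The certificate is then no longer a free solution vector but is fixed as \(\varepsilon_B=n_B/q\bmod 2\). Part (2) of Theorem~\ref{thm:trace-multiplicity} gives the stated identity. Here \(G[U]\)'s degrees are \(\deg_A-\rho_R\), so synchronization of \(U\) is the same as \(U\) being \(2q\)-modular.

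The only step needing care is this last one: reading ``retained set exactly \(U\)'' correctly. If divisibility fails, then no deletion by \(q\)-tuples yields \(U\), so a divisibility requirement must be part of the certificate. I would note this explicitly and not assert it in general. I would also mention that without divisibility, Corollary~\ref{cor:affine-obstruction} and Proposition~\ref{prop:complement-difference} still govern whether \(U\) is \(2q\)-modular, but that case is not a tuple-deletion certificate. Everything else is bookkeeping.
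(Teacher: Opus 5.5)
Your proposal is correct and follows essentially the same route as the paper, whose proof simply cites Theorem~\ref{thm:trace-multiplicity}(1) for the equivalence of (1) and (2), Corollary~\ref{cor:dual-certificate} for the dual failure certificate, and Theorem~\ref{thm:trace-multiplicity}(2) for the exact-retained-set statement. What you add is an unpacking of those citations (parity reduction of the tuple counts \(k_B\), and the forced divisibility \(q\mid n_B\) when all of \(R\) is deleted) plus the explicit mutual-exclusivity check via \(\lambda\), all of which is consistent with the paper.
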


\begin{proof}
The equivalence between \textup{(1)} and \textup{(2)} is Theorem~\ref{thm:trace-multiplicity}.  The dual failure certificate is Corollary~\ref{cor:dual-certificate}.  The exact-retained-set statement is Theorem~\ref{thm:trace-multiplicity}\textup{(2)}.
\end{proof}

This theorem is the finite certificate form of the absorption mechanism.  It shows that one dyadic absorption step has a short verifiable witness of success and a short verifiable witness of failure.

\subsection{Algorithmic certificate}\label{sec:algorithmic-certificate}

For fixed \(A,U,q\), the preceding criteria give an explicit algorithmic certificate.  Choose a common residue \(d\) of the degrees in \(A\) modulo \(q\), and write
\[
        \deg_A(v)\equiv d+q b_A(v)\pmod {2q}
\]
on \(U\).  Different choices of \(d\) change \(b_A|_U\) by a constant vector and hence do not change its quotient class.

Compute the trace multiplicities \(n_B\) for vertices of \(A\setminus U\).  Choose a base vertex \(u_0\in U\), and represent a quotient class \([x]\in\F_2^U/\langle\one_U\rangle\) by the coordinate vector
\[
        (x(u)+x(u_0))_{u\in U\setminus\{u_0\}}.
\]
Build the matrix \(M\) whose columns are these coordinate vectors for the available traces \(B\) with \(n_B\ge q\), and let \(t\) be the coordinate vector of \([b_A|_U]\).  Gaussian elimination decides
\[
        M\varepsilon=t.
\]
If this system is solvable, then for every trace \(B\) with \(\varepsilon_B=1\), delete any \(q\) tail vertices of trace \(B\).  This is the absorption certificate.

If the system is inconsistent, row reduction gives a vector \(y\) with \(y^TM=0\) and \(y^Tt=1\).  Let
\[
        S=\{u\in U\setminus\{u_0\}:y_u=1\},
\]
and put \(Y=S\) if \(|S|\) is even, while \(Y=S\cup\{u_0\}\) if \(|S|\) is odd.  Then \(Y\) is an even subset of \(U\), every available trace meets \(Y\) in even parity, and \(b_A|_U\) meets \(Y\) in odd parity.  This is the parity-cut obstruction of Theorem~\ref{thm:parity-cut}.

\section{Verified classes and examples}

This section adds explicit examples and calibration results.  The basis-tail theorem is a concrete absorption case inside the dyadic framework.  The remaining propositions show that the regular induced subgraph problem is already superlogarithmic on several structured graph classes by simpler arguments.

\subsection{A basis-tail lifting theorem}

Fix a finite core \(U\) and choose a base vertex \(u_0\in U\).  In the quotient space \(\F_2^U/\langle \one_U\rangle\), the singleton classes \([\one_{\{u\}}]\) with \(u\in U\setminus\{u_0\}\) form a basis.  Thus every top-bit label \(b:U\to\F_2\) has the quotient expansion
\[
        [b]=\sum_{\substack{u\in U\setminus\{u_0\}\\ b(u)+b(u_0)=1}}[\one_{\{u\}}].
\]
This observation gives a concrete version of the \(q\)-fold twin-tail absorption theorem.

\begin{theorem}[Basis-tail absorption]\label{thm:basis-tail}
Let \(q\) be a power of two, let \(A\) be a \(q\)-modular witness, and let \(U\subseteq A\) with distinguished vertex \(u_0\in U\).  Put \(D=A\setminus U\).  Suppose \(D\) is a \(q\)-fold twin tail over \(U\).  Assume its \(q\)-fold blocks have the following parity pattern modulo \(2\):
\begin{enumerate}[label=\textup{(\arabic*)}, leftmargin=*]
    \item for each \(u\in U\setminus\{u_0\}\), the number of blocks with trace \(\{u\}\) is congruent to \(b_A(u)+b_A(u_0)\) modulo \(2\);
    \item the total contribution of all remaining block traces, including trace \(\{u_0\}\), the empty trace, the full trace, and all non-singleton traces, is zero in \(\F_2^U/\langle \one_U\rangle\).
\end{enumerate}
Then \(U\) is \(2q\)-modular.  In particular, if \(|U|\le 2q\), then \(G[U]\) is regular.
\end{theorem}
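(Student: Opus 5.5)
The plan is to reduce the statement to the all-block criterion of Theorem~\ref{thm:twin-tail} (equivalently, the all-block case of Theorem~\ref{thm:core-correction}). That criterion says \(U\) is \(2q\)-modular as soon as
\[
        [b_A|_U]=\sum_{i=1}^t[\one_{B_i}]
        \quad\text{in }\F_2^U/\langle \one_U\rangle .
\]
So the whole task is to verify this single quotient identity from the two parity hypotheses.

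First I split the block sum into two pieces. One piece consists of the blocks whose trace is a singleton \(\{u\}\) with \(u\ne u_0\). The other piece collects every remaining block, namely those with traces \(\{u_0\}\), \(\varnothing\), \(U\), and all traces of size at least two. Hypothesis (2) says the second piece vanishes in the quotient. For the first piece, write \(k_u\) for the number of blocks with trace \(\{u\}\); its contribution is \(\sum_{u\ne u_0} k_u[\one_{\{u\}}]\) over \(\F_2\). By hypothesis (1) this equals
\[
        \sum_{u\ne u_0}\bigl(b_A(u)+b_A(u_0)\bigr)[\one_{\{u\}}].
\]
Here only parities matter, since we are working over \(\F_2\).

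Next I identify this expression with \([b_A|_U]\) using the basis expansion recorded just before the theorem. To check that expansion, consider the vector \(x=b_A|_U+b_A(u_0)\one_U\). It has the same class as \(b_A|_U\), it vanishes at \(u_0\), and at each \(u\ne u_0\) it equals \(b_A(u)+b_A(u_0)\). Hence \(x=\sum_{u\ne u_0}x(u)\one_{\{u\}}\) holds exactly in \(\F_2^U\). Passing to classes gives the required identity.

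Combining the two pieces yields the all-block identity, and Theorem~\ref{thm:twin-tail} then gives that \(U\) is \(2q\)-modular. The regularity claim when \(|U|\le 2q\) follows from Lemma~\ref{lem:terminal}. There is no real obstacle. The only point needing care is the bookkeeping: the singleton trace \(\{u_0\}\) must go into the "remaining" group and not into the basis group, so that no block is counted twice or omitted, and the partition is exhaustive.
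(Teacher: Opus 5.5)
Your proposal is correct and follows essentially the same route as the paper. You verify the all-block identity $[b_A|_U]=\sum_i[\one_{B_i}]$ by splitting off the non-$u_0$ singleton blocks, killing the rest with hypothesis (2), and matching via the singleton basis expansion (which you additionally check through $x=b_A|_U+b_A(u_0)\one_U$), then apply Theorem~\ref{thm:twin-tail} and Lemma~\ref{lem:terminal}.
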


\begin{proof}
By the basis expansion above,
\[
        [b_A|_U]
        =\sum_{\substack{u\in U\setminus\{u_0\}\\ b_A(u)+b_A(u_0)=1}}[\one_{\{u\}}]
\]
in \(\F_2^U/\langle \one_U\rangle\).  By the two assumptions, the parity sum of all \(q\)-fold block traces is exactly the same quotient class: the prescribed singleton traces give the basis expansion, and every remaining trace has total contribution zero in the quotient.  Therefore the hypothesis of Theorem~\ref{thm:twin-tail} holds.  Hence \(U\) is \(2q\)-modular.  If \(|U|\le 2q\), Lemma~\ref{lem:terminal} makes \(G[U]\) regular.
\end{proof}

\begin{corollary}[First-bit basis-pair absorption]\label{cor:first-bit-basis}
Let \(A\) be a \(2\)-modular witness and let \(U\subseteq A\).  Suppose \(A\setminus U\) can be partitioned into equal-trace pairs over \(U\).  Choose \(u_0\in U\).  If the parity of the number of pairs with trace \(\{u\}\) is \(b_A(u)+b_A(u_0)\) for every \(u\ne u_0\), and all other pair traces contribute zero in \(\F_2^U/\langle \one_U\rangle\), then \(U\) is \(4\)-modular.
\end{corollary}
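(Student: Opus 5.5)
This corollary is the specialization \(q=2\) of Theorem~\ref{thm:basis-tail}, so the plan is to check the hypotheses of that theorem and then invoke it.

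First, I note that \(2\) is a power of two, so a \(2\)-modular witness \(A\) satisfies the standing assumptions of Theorem~\ref{thm:basis-tail} with \(q=2\). The top-bit label is then given by
\[
\deg_A(v)\equiv d+2b_A(v)\pmod 4 .
\]
Next, the hypothesis that \(A\setminus U\) can be partitioned into equal-trace pairs over \(U\) says exactly that \(D=A\setminus U\) is a \(2\)-fold twin tail over \(U\) in the sense of Definition~\ref{def:q-fold-tail}. Its blocks are the pairs, and the block traces are the common pair traces.

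I then match the two parity conditions. Condition (1) of Theorem~\ref{thm:basis-tail} asks that, for each \(u\neq u_0\), the number of blocks with trace \(\{u\}\) be congruent to \(b_A(u)+b_A(u_0)\) modulo \(2\); this is verbatim the assumption on singleton pairs. Condition (2) asks that all remaining block traces contribute zero in \(\F_2^U/\langle\one_U\rangle\); this is the assumption that ``all other pair traces contribute zero.''

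The only point needing care is interpretive. The phrase ``all other pair traces'' must include trace \(\{u_0\}\), the empty trace and the full trace, exactly as in condition (2) of the theorem. Under that reading the hypotheses coincide, and Theorem~\ref{thm:basis-tail} yields that \(U\) is \(2q=4\)-modular.

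As a cross-check, the same conclusion follows through Corollary~\ref{cor:first-bit-pair}. The basis expansion of \([b_A|_U]\) by singletons, together with the vanishing of the other contributions, gives \([b_A|_U]=\sum_i[\one_{B_i}]\), which is the required all-block identity. I expect no real obstacle.
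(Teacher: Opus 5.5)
Your proposal is correct and matches the paper's approach: the corollary is the \(q=2\) specialization of Theorem~\ref{thm:basis-tail}, since a partition of \(A\setminus U\) into equal-trace pairs is exactly a \(2\)-fold twin tail over \(U\). Your reading that ``all other pair traces'' includes \(\{u_0\}\), the empty trace, and the full trace is the intended one, and your cross-check through Corollary~\ref{cor:first-bit-pair} is also valid.
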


This is the most explicit finite form of the first-bit absorption mechanism.  It says that singleton trace-pairs form a basis for all possible mod-\(4\) top-bit defects on the retained core.

\subsection{Worked example: a mod-\(2\) to mod-\(4\) lift}

Let \(A\) be a \(2\)-modular witness and let \(U=\{1,2,3,4\}\).  Choose \(u_0=4\).  Suppose the top-bit label on \(U\) is
\[
        b_A=(1,0,1,0).
\]
Then, in \(\F_2^U/\langle \one_U\rangle\), this class is \([\one_{\{1\}}]+[\one_{\{3\}}]\).  If \(A\setminus U\) contains one equal-trace pair with trace \(\{1\}\) and one equal-trace pair with trace \(\{3\}\), and every other equal-trace pair contributes zero in the quotient, then deleting the tail makes the degrees on \(U\) congruent modulo \(4\).  Thus \(U\) is a \(4\)-modular witness.  If \(|U|\le 4\), it is already regular.

This example illustrates the certificate viewpoint: the two pair traces \(\{1\}\) and \(\{3\}\) are columns of a matrix over \(\F_2\), and their sum is the target top-bit defect.

\subsection{Calibration: perfect graphs}

The dyadic framework is aimed at arbitrary graphs.  For comparison, several structured graph classes already have much larger regular induced subgraphs for elementary reasons.

\begin{proposition}[Perfect graph calibration]\label{prop:perfect}
If \(G\) is a perfect graph on \(n\) vertices, then \(G\) contains a regular induced subgraph on at least \(\sqrt n\) vertices.  In particular, this holds for split graphs, threshold graphs, cographs, interval graphs, chordal graphs, and bipartite graphs.
\end{proposition}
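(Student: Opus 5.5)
We will use the fact that perfect graphs satisfy $\alpha(G)\,\omega(G)\ge n$. This is Lovász's perfect graph theorem in its product form; it also follows from $\chi(G)=\omega(G)$ together with the observation that every color class is an independent set, so $n\le \chi(G)\alpha(G)=\omega(G)\alpha(G)$. It follows that $\max\{\alpha(G),\omega(G)\}\ge\sqrt n$.

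We then pick whichever of a maximum clique or a maximum independent set is larger. A clique on $k$ vertices induces a $(k-1)$-regular graph, and an independent set on $k$ vertices induces a $0$-regular graph. Either way $G$ contains a regular induced subgraph on at least $\lceil\sqrt n\,\rceil\ge\sqrt n$ vertices. Since these sizes are integers, we may take the bound as $\lceil \sqrt n\rceil$.

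For the list of classes, we only need to recall that split, threshold, cograph, interval, chordal and bipartite graphs are all perfect. The standard reasons are these. Chordal graphs, which include interval and split graphs, are perfect. Threshold graphs are both split graphs and cographs. Cographs are $P_4$-free and hence perfect. Bipartite graphs satisfy $\chi=\omega$ on every induced subgraph.

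The only real step is the inequality $\alpha\omega\ge n$. The coloring argument above is the cleanest route, since perfection gives $\chi(G)=\omega(G)$ directly. I do not expect any obstacle beyond citing perfection correctly.
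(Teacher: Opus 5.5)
Your proof is correct and is the same argument as the paper's: $n\le\chi(G)\alpha(G)=\omega(G)\alpha(G)$ by perfection, so the larger of a maximum clique and a maximum independent set is a regular induced subgraph on at least $\sqrt n$ vertices. Your justification that each listed class is perfect is a welcome addition; the paper leaves that point implicit.
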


\begin{proof}
In every graph, \(\alpha(G)\chi(G)\ge n\), since a proper \(\chi(G)\)-coloring has an independent color class of size at least \(n/\chi(G)\).  If \(G\) is perfect, then \(\chi(G)=\omega(G)\).  Hence
\[
        \alpha(G)\omega(G)\ge n.
\]
Therefore \(\max\{\alpha(G),\omega(G)\}\ge \sqrt n\).  An independent set induces a \(0\)-regular graph, and a clique induces a regular graph of degree one less than its order.  Thus \(f(G)\ge \sqrt n\).
\end{proof}

This proposition is not meant as a new result about perfect graphs.  Its role is to calibrate the framework: the hard case of the Erd\H{o}s--Fajtlowicz--Staton problem must come from graphs where neither large cliques nor large independent sets are available, and where the dyadic obstruction cannot be dispatched by perfection alone.

\subsection{Bounded neighborhood diversity}

A second calibration class is provided by graphs with bounded neighborhood diversity.  Recall that a graph has neighborhood diversity at most \(t\) if its vertex set can be partitioned into at most \(t\) types such that each type is either a clique or an independent set, and between any two distinct types the bipartite graph is either complete or empty \cite{Lampis2012}.

\begin{proposition}[Neighborhood diversity calibration]\label{prop:nd}
If \(G\) has \(n\) vertices and neighborhood diversity at most \(t\), then \(G\) contains a regular induced subgraph on at least \(n/t\) vertices.
\end{proposition}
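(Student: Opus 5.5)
The argument is pigeonhole followed by a short inspection of how degrees look inside a single type. Fix a partition of \(V(G)\) into at most \(t\) types \(T_1,\ldots,T_s\) (\(s\le t\)) as in the definition of neighborhood diversity. Each type is a clique or an independent set, and between distinct types the bipartite graph is complete or empty. Since \(\sum_i |T_i|=n\) and \(s\le t\), some type \(T_i\) has \(|T_i|\ge n/s\ge n/t\).

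We take \(S=T_i\) and check that \(G[S]\) is regular. This follows directly from the definition of a type, since \(G[T_i]\) is itself a clique or an independent set. If \(T_i\) is a clique, then every vertex of \(S\) has degree \(|S|-1\) in \(G[S]\). If \(T_i\) is independent, then every vertex has degree \(0\). In either case \(G[S]\) is regular, and so \(f(G)\ge |S|\ge n/t\). Because \(|T_i|\) is an integer, this even gives \(f(G)\ge\lceil n/t\rceil\).

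No real obstacle is expected. The only point to watch is that the definition quoted from \cite{Lampis2012} actually makes each type a clique or an independent set. Under the alternative definition, where a type is a class of vertices \(u,v\) with \(N(u)\setminus\{v\}=N(v)\setminus\{u\}\), this is a standard fact: within such a class all pairs are either uniformly adjacent or uniformly nonadjacent. We can cite that fact or prove it in one line. The complete/empty structure between types is not needed for this argument.
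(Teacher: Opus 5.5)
Your proof is correct and is essentially the paper's argument: pick a largest neighborhood-diversity class, which has at least \(n/t\) vertices, and note that it induces a clique or an independent set and is therefore regular. Your remarks on the ceiling and on the twin-class definition are valid but not needed.
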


\begin{proof}
Take a largest neighborhood-diversity class \(X\).  Then \(|X|\ge n/t\).  The induced graph \(G[X]\) is either a clique or an independent set, hence is regular.  Therefore \(f(G)\ge |X|\ge n/t\).
\end{proof}

This proposition is elementary, but it is useful context for the dyadic program: graphs with few trace types already contain large regular induced subgraphs.  The difficult case must therefore have many trace types and no large twin class, exactly the setting where the quotient obstruction and absorption certificates become relevant.

\begin{corollary}[Necessary structure of hard graphs]\label{cor:hard-structure}
Let \(G\) be an \(n\)-vertex graph with \(f(G)<k\).  Then \(\nd(G)>n/k\).  In particular, any family of graphs with \(f(G)=O(\log n)\) must have neighborhood diversity \(\Omega(n/\log n)\).
\end{corollary}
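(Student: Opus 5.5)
The plan is to prove the contrapositive of Proposition~\ref{prop:nd}, exactly as the corollary is phrased. Suppose \(G\) has \(n\) vertices and \(f(G)<k\), and suppose for contradiction that \(\nd(G)\le n/k\). I will apply Proposition~\ref{prop:nd} with \(t=\nd(G)\). It yields a regular induced subgraph on at least \(n/t\ge n/(n/k)=k\) vertices. This gives \(f(G)\ge k\), a contradiction, so \(\nd(G)>n/k\).

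The one point needing care is that \(t\) must be a positive integer, since a partition into types has an integer number of parts. For \(n\ge 1\) we have \(\nd(G)\ge 1\), so \(t\) is legitimate and no division by zero occurs. The inequality \(t\le n/k\) need not hold with \(n/k\) an integer: I use it only through \(n/t\ge k\), which holds whenever \(0<t\le n/k\). I also note that \(f(G)<k\) together with \(f(G)\ge 1\) forces \(k\ge 2\), so \(n/k\) is a positive real, although this is not strictly needed.

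For the asymptotic statement, suppose a family satisfies \(f(G)\le c\log n\) for large \(n\). I take \(k=\lfloor c\log n\rfloor+1\), so that \(f(G)<k\) and \(k\le 2c\log n\) for large \(n\). The first part then gives \(\nd(G)>n/k\ge n/(2c\log n)\), which is \(\Omega(n/\log n)\).

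I expect no real obstacle in any of these steps. The only subtlety is choosing \(k\) as an integer strictly greater than \(f(G)\) that is still \(O(\log n)\), which is handled by the choice above.
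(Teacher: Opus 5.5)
Your proof is correct and matches the paper's: it is the contrapositive of Proposition~\ref{prop:nd}, applied with \(t=\nd(G)\). You are a little more careful than the paper in making \(k=\lfloor c\log n\rfloor+1\) an explicit integer for the asymptotic part, where the paper just says to take \(k\) a constant multiple of \(\log n\).
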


\begin{proof}
If \(G\) had neighborhood diversity at most \(n/k\), then Proposition~\ref{prop:nd} would give a regular induced subgraph on at least \(k\) vertices, contradicting \(f(G)<k\).  The final statement follows by taking \(k\) to be a constant multiple of \(\log n\).
\end{proof}

This corollary is useful as a sanity check: any extremal construction for the Erd\H{o}s--Fajtlowicz--Staton problem must avoid not only large cliques and large independent sets, but also large twin classes.

\subsection{A finite absorption-or-obstruction certificate}

Combining the absorption criteria above with the basis-tail theorem gives the following self-contained theorem package.

\begin{theorem}[Absorption-or-obstruction certificate]\label{thm:package}
For every power of two \(q\), the following are true.
\begin{enumerate}[label=\textup{(\arabic*)}, leftmargin=*]
    \item The obstruction to lifting a \(q\)-modular core \(U\) to modulus \(2q\) is the quotient class \([\rho_D-qb_A]\) modulo constants.
    \item If the deleted tail \(D\) is a \(q\)-fold twin tail whose trace classes represent \([b_A|_U]\), then \(U\) is \(2q\)-modular.
    \item If the \(q\)-fold tail satisfies the singleton-trace basis conditions of Theorem~\ref{thm:basis-tail}, including the zero-contribution condition for all remaining traces, then the representation condition holds automatically.
    \item The core-correction condition is characterized exactly by the span criterion of Theorem~\ref{thm:core-correction} and is checkable by solving a linear system over \(\F_2\).
    \item Under the rank-rich condition of Theorem~\ref{thm:rank-rich}, every top-bit defect on \(U\) can be corrected by deleting at most \(q(|U|-1)\) tail vertices; Theorem~\ref{thm:pair-trace-connected} gives a connected pair-trace sufficient condition, and Theorem~\ref{thm:random-reservoir} with Corollary~\ref{cor:uniform-random} gives high-probability trace-availability models.
    \item If absorption fails for the fixed core, then failure is witnessed by an explicit even parity cut \(Y\subseteq U\) as in Theorem~\ref{thm:parity-cut}.
\end{enumerate}
Thus one dyadic lift has a finite three-part certificate under explicit trace data: an exact quotient obstruction, a connected pair-trace absorption condition, and a parity-cut failure certificate.
\end{theorem}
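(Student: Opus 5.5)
The plan is to prove Theorem~\ref{thm:package} item by item, each time reducing to a result already established and adding only the small bookkeeping needed to match notation.

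\textbf{Item (1).} Put \(R=D=A\setminus U\) in Corollary~\ref{cor:affine-obstruction}. That corollary says \(U\) is \(2q\)-modular if and only if \([\rho_D-qb_A]=0\) in \((\Z/2q\Z)^U/\langle\one_U\rangle\). Proposition~\ref{prop:complement-difference} identifies the tail part \([\rho_D]\) with the oriented complement-difference expression. This is exactly the sense in which the quotient class is "the obstruction."

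\textbf{Item (2).} Read "represent \([b_A|_U]\)" as the all-block identity \([b_A|_U]=\sum_i[\one_{B_i}]\). Item (2) is then Theorem~\ref{thm:twin-tail} verbatim.

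\textbf{Item (3).} This is Theorem~\ref{thm:basis-tail}. It is worth spelling out why the representation condition of item (2) holds automatically. Split the block sum \(\sum_i[\one_{B_i}]\) into two parts:
\begin{itemize}[leftmargin=*]
    \item the singleton traces \(\{u\}\), \(u\ne u_0\), which by hypothesis (1) of Theorem~\ref{thm:basis-tail} contribute the basis expansion of \([b_A|_U]\);
    \item the remaining traces, which contribute zero by hypothesis (2).
\end{itemize}
Hence the sum equals \([b_A|_U]\), and item (2) applies.

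\textbf{Item (4).} This is the span statement of Theorem~\ref{thm:core-correction}. Decidability comes from Corollary~\ref{cor:algorithmic-verification}, or equivalently Corollary~\ref{cor:polytime-fixed-core}: decide solvability of \(M\varepsilon=t\) over \(\F_2\) by Gaussian elimination.

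\textbf{Item (5).} This collects Theorem~\ref{thm:rank-rich} (the deletion bound \(q(|U|-1)\)), Theorem~\ref{thm:pair-trace-connected}, Theorem~\ref{thm:random-reservoir} and Corollary~\ref{cor:uniform-random}, each applied as stated.

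\textbf{Item (6).} This is Theorem~\ref{thm:parity-cut}. The cut \(Y\) is explicit: Section~\ref{sec:algorithmic-certificate} obtains it from the inconsistency vector \(y\) by adjoining \(u_0\) when needed to make \(|Y|\) even.

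\textbf{Closing summary.} The three-part certificate is read off directly: item (1) is the quotient obstruction, item (5) via Theorem~\ref{thm:pair-trace-connected} is the connected pair-trace absorption condition, and item (6) is the failure certificate.

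\textbf{Main obstacle.} The only point needing care is consistency of quantifiers between the items. In (1) and (2) the retained set is exactly \(U\), so the all-tail identity is required. In (4) through (6) one deletes only some equal-trace \(q\)-tuples and synchronizes only the core. I will state this distinction explicitly so that (2) is not read as a span condition.

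I will also check that the mod-\(2q\) class in (1) is compatible with the \(\F_2\) class used elsewhere. Since \(D\) is a \(q\)-fold twin tail, \(\rho_D\) is divisible by \(q\), so \(\rho_D-qb_A=q\bigl(\sum_i\one_{B_i}-b_A\bigr)\). Its class modulo \(2q\) vanishes exactly when \(\bigl[\sum_i\one_{B_i}-b_A\bigr]=0\) in \(\F_2^U/\langle\one_U\rangle\), via multiplication by \(q\) giving \(\F_2\cong q\Z/2q\Z\). This reconciles item (1) with items (2) and (3).
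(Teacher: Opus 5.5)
Your proposal is correct and matches the paper's proof: each item is discharged by citing the same prior result, namely Corollary~\ref{cor:affine-obstruction}, Theorem~\ref{thm:twin-tail}, Theorem~\ref{thm:basis-tail}, Theorem~\ref{thm:core-correction} with Corollary~\ref{cor:algorithmic-verification}, Theorems~\ref{thm:rank-rich}, \ref{thm:pair-trace-connected}, \ref{thm:random-reservoir} with Corollary~\ref{cor:uniform-random}, and Theorem~\ref{thm:parity-cut}. Your extra check, that multiplication by \(q\) identifies the \(\F_2\)-class \([\sum_i\one_{B_i}-b_A]\) with the mod-\(2q\) class \([\rho_D-qb_A]\), is also correct; it is a useful consistency remark that the paper's proof does not need.
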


\begin{proof}
Part \textup{(1)} is Corollary~\ref{cor:affine-obstruction}.  Part \textup{(2)} is Theorem~\ref{thm:twin-tail}.  Part \textup{(3)} is Theorem~\ref{thm:basis-tail}.  Part \textup{(4)} is Theorem~\ref{thm:core-correction} together with Corollary~\ref{cor:algorithmic-verification}.  Part \textup{(5)} is Theorems~\ref{thm:rank-rich}, \ref{thm:pair-trace-connected}, and~\ref{thm:random-reservoir} together with Corollary~\ref{cor:uniform-random}.  Part \textup{(6)} is Theorem~\ref{thm:parity-cut}.
\end{proof}

\section{Scope and limitations}

The paper should be read as a framework and certificate paper, not as a solution of the Erd\H{o}s--Fajtlowicz--Staton problem.  The conditional threshold theorem explains what kind of dyadic lifting result would imply the desired superlogarithmic bound.  The exact tail obstruction identifies the algebraic object that such a lifting theorem must control.  The rank-rich and twin-block absorption results prove complete local mechanisms for eliminating this obstruction once enough trace directions are available.  The connected pair-trace theorem gives a concrete graph-theoretic way for those directions to be forced, and the parity-cut theorem gives the corresponding concrete failure certificate when they are not.

The main open point left by this paper is global availability: an arbitrary \(q\)-modular witness need not visibly contain enough equal-trace \(q\)-tuples, or a connected \(q\)-heavy pair-trace reservoir, to trigger the rank-rich absorption theorem.  Thus the results here do not by themselves improve the general lower bound for \(F(n)\).  Their value is that they convert one natural obstruction-elimination step into finite linear algebra and provide both success and parity-cut failure certificates.

This limitation is also a guide for future work.  A proof of the full Erd\H{o}s--Fajtlowicz--Staton conjecture along this route would need to show that, after passing to a sufficiently large subwitness or after applying controlled exchanges, the available trace classes have enough rank to absorb the top-bit defect, or else that the dual certificate forces a different regularizing structure.

\section{Conclusion}

The dyadic framework reduces the regular induced subgraph problem to a sequence of modular lifting problems.  The parity base and the terminal modular criterion are unconditional.  The threshold theorem shows that polynomial-loss dyadic lifting with exact modular-witness output size would imply the Erd\H{o}s--Fajtlowicz--Staton superlogarithmic bound.  The pair-trace theorem shows that a simple connectedness condition on \(q\)-heavy two-point traces already forces full rank-rich absorption.  The main unresolved task is to prove that such absorption structure, or a substitute forced by the parity-cut obstruction, must arise in arbitrary large modular witnesses.

This reframing has two advantages.  First, it separates the global growth problem from local modular obstructions.  Second, it gives finite absorption and obstruction certificates that may be approachable either by hand, by computer-certified enumeration, or by proving restricted positive results in structured graph classes.

\medskip
\noindent\textbf{Disclaimer.}
The views expressed in this article are those of the authors and do not necessarily reflect the views of their institutions.

\end{document}